\title{Rejection Sampling for Tempered L\'evy Processes}
\author{Michael Grabchak\footnote{Email address: mgrabcha@uncc.edu}\ \ 
{\it University of North Carolina Charlotte}}
\begin{document}
\newtheorem{prop}{Proposition}
\newtheorem{thrm}{Theorem}
\newtheorem{defn}{Definition}
\newtheorem{cor}{Corollary}
\newtheorem{lemma}{Lemma}
\newtheorem{remark}{Remark}
\newtheorem{exam}{Example}

\newcommand{\rd}{\mathrm d}
\newcommand{\rE}{\mathrm E}
\newcommand{\ts}{TS^p_\alpha}
\newcommand{\tr}{\mathrm{tr}}
\newcommand{\iid}{\stackrel{\mathrm{iid}}{\sim}}
\newcommand{\eqd}{\stackrel{d}{=}}
\newcommand{\cond}{\stackrel{d}{\rightarrow}}
\newcommand{\conv}{\stackrel{v}{\rightarrow}}
\newcommand{\conw}{\stackrel{w}{\rightarrow}}
\newcommand{\conp}{\stackrel{p}{\rightarrow}}
\newcommand{\simp}{\stackrel{p}{\sim}}

\maketitle

\begin{abstract}
We extend the idea of tempering stable L\'evy processes to tempering more general classes of L\'evy processes. We show that the original process can be decomposed into the sum of the tempered process and an independent point process of large jumps. We then use this to set up a rejection sampling algorithm for sampling from the tempered process. A small scale simulation study is given to help understand the performance of this algorithm.\\

\noindent\textbf{Keywords:} tempered L\'evy processes; tempered stable distributions; rejection sampling\\
\textbf{MSC2010:} 60G51; 60E07
\end{abstract}

\section{Introduction}

Tempered stable distributions are a class of models obtained by modifying the tails of stable distributions to make them lighter. This leads to models that are more realistic for a variety of applications, where real-world frictions prevent extremely heavy tails from occurring. Perhaps the earliest models of this type are Tweedie distributions, which were introduced in \cite{Tweedie:1984}, see also \cite{Kuchler:Tappe:2013} for a recent review. A more general approach is given in \cite{Rosinski:2007}. This was further generalized in several directions in \cite{Rosinski:Sinclair:2010}, \cite{Bianchi:Rachev:Kim:Fabozzi:2011}, and \cite{Grabchak:2012}. A survey with many references, which discuss a variety of applications, including those to actuarial science, biostatistics, computer science, mathematical finance, and physics can be found in \cite{Grabchak:2016book}. Associated with every tempered stable distribution, is a tempered stable L\'evy process, which behaves like a stable L\'evy process in a small time frame, but it has fewer large jumps.

The purpose of this paper is two-fold. First, we extend the idea of tempering a stable L\'evy process to tempering any L\'evy process, and give results about the relationship between the original process and the tempered one. In particular, we show that the original process can be decomposed into the sum of the tempered process and an independent point process of large jumps.  Our second purpose is to use this decomposition to set up a rejection sampling algorithm for simulating from the tempered process.

The problem of simulation has not been resolved even for tempered stable distributions. For these, rejection sampling techniques are currently known only for Tweedie distributions. For other tempered stable distributions, the only known exact simulation technique is the inversion method, which is computationally inefficient because it requires numerically calculating the quantile function, which can only be done by numerically inverting the cumulative distribution function (cdf). However, calculating the cdf is, itself, expensive, since, for most tempered stable distributions, it can only be evaluated by numerically applying an inverse Fourier transform to the characteristic function. Other simulation techniques are only approximate, and are based either on truncating a shot-noise representation  \citep{Imai:Kawai:2011}, or on approximations by a compound Poisson Process \citep{Baeumer:Kovacs:2012} or a Brownian motion \citep{Cohen:Rosinski:2007}.

The basic idea of our rejection sampling approach is to start by sampling an increment of the original process. This increment is then rejected if it is too large, otherwise it is accepted. The procedure is in keeping with the motivation for defining tempered processes as having marginal distributions that are similar to those of the original process, but with lighter tails. In deciding if the observation is too large, we require the ability to evaluate the probability density functions of the marginal distributions of both the original and the tempered process. While this may be computationally challenging, in many situations it is more efficient than implementing the inversion method, see the simulation results in Section \ref{sec: sims} below.

The rest of the paper is organized as follows. In Section \ref{sec: inf div}, we recall basic facts about infinitely divisible distributions and their associated L\'evy processes. In Section \ref{sec: main}, we formally introduce tempered L\'evy processes and give our main theoretical results. Then, in Section \ref{sec: sampling}, we show how to use these results to set up a rejection sampling algorithm. In Section \ref{sec: ts}, we recall some basic facts about tempered stable distributions and give conditions under which our results hold. Then, in Section \ref{sec: pts}, we give detailed conditions for an important subclass of tempered stable distributions, which has additional structure and is commonly used. Finally, in Section \ref{sec: sims}, we give a small scale simulation study to illustrate how our method works in practice.

Before proceeding, we introduce some notation. Let $\mathbb N=\{1,2,\dots\}$ be the set of natural numbers. Let $\mathbb R^d$ be the space of $d$-dimensional column vectors of real numbers equipped with the usual inner product $\langle\cdot,\cdot\rangle$ and the usual norm $|\cdot|$. 
Let $\mathbb S^{d-1}=\{x\in\mathbb R^d: |x|=1\}$ denote the unit sphere in $\mathbb R^d$. Let $\mathfrak B(\mathbb R^d)$ and $\mathfrak B(\mathbb S^{d-1})$ denote the Borel sets in $\mathbb R^d$ and $\mathbb S^{d-1}$, respectively. For a Borel measure $M$ on $\mathbb R^d$ and $t\ge0$, we write $tM$ to denote the Borel measure on $\mathbb R^d$ given by $(tM)(B)=tM(B)$ for every $B\in\mathfrak B(\mathbb R^d)$. 
If $a,b\in\mathbb R$, we write $a\vee b$ and $a\wedge b$ to denote, respectively, the maximum and the minimum of $a$ and $b$. If $\mu$ is a probability measure on $\mathbb R^d$, we write $X\sim\mu$ to denote that $X$ is an $\mathbb R^d$-valued random variable with distribution $\mu$. Finally, we write $U(0,1)$ to denote the uniform distribution on $(0,1)$ and $N(b,A)$ to denote the multivariate normal distribution with mean vector $b$ and covariance matrix $A$. 

\section{Infinitely Divisible Distributions and L\'evy Processes}\label{sec: inf div}

Recall that an infinitely divisible distribution $\mu$ on $\mathbb R^d$ is a probability measure with a characteristic function of the form $\phi_\mu(z) = \exp\{C_\mu(z)\}$, where, for $z\in\mathbb R^d$,
\begin{eqnarray*}
C_\mu(z) = -\frac{1}{2}\langle z,Az\rangle + i\langle b,z\rangle + \int_{\mathbb R^d}\left(e^{i\langle z,x\rangle}-1-i\langle z,x\rangle h(x)\right)L(\rd x).
\end{eqnarray*}
Here, $A$ is a symmetric nonnegative-definite $d\times d$-dimensional matrix called the Gaussian part, $b\in\mathbb R^d$ is called the shift, and $L$ is a Borel measure, called the L\'evy measure, which satisfies
\begin{eqnarray}\label{eq: cond for levy measure}
L(\{0\})=0 \mbox{\ and\ } \int_{\mathbb R^d}(|x|^2\wedge1)L(\rd x)<\infty.
\end{eqnarray}
The function $h:\mathbb R^d\mapsto\mathbb R$, which we call the $h$-function, can be any Borel function satisfying
\begin{eqnarray}\label{eq: when use h}
\int_{\mathbb R^d}\left|e^{i\langle z,x\rangle}-1-i\langle z,x\rangle h(x)\right|L(\rd x)<\infty
\end{eqnarray}
for all $z\in\mathbb R^d$. For a fixed $h$-function, the parameters $A$, $L$, and $b$ uniquely determine the distribution $\mu$, and we write
$$
\mu=ID(A,L,b)_h.
$$
The choice of $h$ does not affect parameters $A$ and $L$, but different choices of $h$ result in different values for $b$, see Section 8 in \cite{Sato:1999}. 

Associated with every infinitely divisible distribution $\mu=ID(A,L,b)_h$ is a L\'evy process, $\{X_t:t\ge0\}$, which is stochastically continuous with independent and stationary increments, and the characteristic function of $X_t$ is $\left(\phi_\mu(z)\right)^t$. We denote the distribution of $X_t$ by $\mu^t$. It follows that, for each $t\ge0$, $X_t\sim\mu^t= ID(tA,tL,tb)_h$. For more on infinitely divisible distributions and their associated L\'evy processes see \cite{Sato:1999}.

\section{Main Results}\label{sec: main}

Let $\mu=ID(A,L,b)_h$ be any infinitely divisible distribution,
and define the Borel measure $\tilde L$ by
$$
\tilde L(\rd x) = g(x) L(\rd x),
$$
where $g:\mathbb R^{d}\mapsto[0,\infty)$ is a Borel function. Throughout, we make the following assumptions: 
\begin{itemize}
\item[\textbf{A1.}] $0\le g(x) \le 1$ for all $x\in\mathbb R^{d}$, and
\item[\textbf{A2.}]
\begin{eqnarray*}
\int_{\mathbb R^d}\left(\left|xh(x)\right|\vee1\right) \left(1-g(x)\right) L(\rd x)<\infty.
\end{eqnarray*}
\end{itemize}

Assumption A1 guarantees that $\tilde L$ satisfies \eqref{eq: cond for levy measure} and \eqref{eq: when use h}. Thus $\tilde L$ is a valid L\'evy measure, and we can use the same $h$-function with $\tilde L$ as with $L$. Let $\tilde\mu=ID(A,\tilde L,\tilde b)_h$, where
$$
\tilde b = b - \int_{\mathbb R^{d}} x h(x)\left(1-g(x)\right)  L(\rd x).
$$
We call $\tilde \mu$ the tempering of $\mu$ and we call $g$ the tempering function.

\begin{remark}
The name ``tempering function'' comes from the fact that, when the additional assumption $\lim_{|x|\to\infty}g(x)=0$ holds, the tails of the distribution $\tilde\mu$ are lighter than those of $\mu$. In this sense, the distribution $\tilde\mu$ ``tempers'' the tails of $\mu$. While this assumption is part of the motivation for defining such distributions, we do not require it in this paper.
\end{remark}

\begin{remark}
We can always take $h(x) = 1_{[|x|\le1]}$. In this case $\left(\left|xh(x)\right|\vee1\right)=1$ and Assumption A2 becomes $\int_{\mathbb R^d}\left(1-g(x)\right) L(\rd x)<\infty$. However, when we work with other $h$-functions, we need Assumption A2 to be as given.
\end{remark}

In light of Assumptions A1 and A2, we can define the finite Borel measure
$$
\rho(\rd x) = \left(1-g(x)\right) L(\rd x).
$$
Now, set
$$
\eta := \rho(\mathbb R^d)<\infty,
$$
and define the probability measure
$$
\rho_1(B) = \frac{\rho(B)}{\eta},  \ \ B\in\mathfrak B(\mathbb R^d).
$$

Let $Z_1,Z_2,\dots$ be independent and identically distributed (iid) random variables with distribution $\rho_1$. Independent of these, let $\{N_t:t\ge0\}$ be a Poisson process with intensity $\eta$, and set
\begin{eqnarray}\label{eq: Vt}
V_t = \sum_{i=1}^{N_t} Z_i, \ \ t\ge0.
\end{eqnarray}
This is a compound Poisson process and, by Proposition 3.4 in \cite{Cont:Tankov:2004}, the characteristic function of $V_t$ is given by
$$
\rE\left[e^{i\langle V_t,z\rangle}\right] = \exp\left\{ t\int_{\mathbb R^d} \left(e^{i\langle x,z\rangle}-1\right) \rho(\rd x)\right\}, \  \ z\in\mathbb R^d.
$$
Let $T = \inf\{t: N_t>0\}$ and note that, by properties of Poisson processes, $T$ has an exponential distribution with rate $\eta$, i.e.\
\begin{eqnarray}\label{eq: distrib of T}
P(T>t) = e^{-t\eta}, \ \ t>0.
\end{eqnarray}
We now give our main result, which generalizes a result about relativistic stable distributions\footnote{Relativistic stable distributions are the distributions of $\sqrt X Z$, where $X$ and $Z$ are independent, $X$ has a Tweedie distribution, and $Z\sim N(0,I)$, where $I$ is the identity matrix.} given in \cite{Ryznar:2002}.

\begin{thrm}\label{thrm: repres}
Let $\mu=ID(A,L,b)_h$ and let $g:\mathbb R^{d}\mapsto[0,\infty)$ be a Borel function satisfying Assumptions A1 and A2. Let $\tilde\mu=ID(A,\tilde L,\tilde b)_h$, $V=\{V_t:t\ge0\}$, and $T$ be as described above. Let $\tilde X=\{\tilde X_t:t\ge0\}$ be a L\'evy process, independent of $V$, with $\tilde X_1\sim\tilde\mu$ and set
$$
X_t = \tilde X_t + V_t, \ \ \ t\ge0.
$$ 
1. The process $X=\{X_t:t\ge0\}$ is a L\'evy process with $X_1\sim \mu$.\\
2. If $0\le t<T$, then $X_t=\tilde X_t$.\\
3. For any $B\in\mathfrak B(\mathbb R^d)$ and $t\ge0$ we have
$$
P(\tilde X_t\in B) \le e^{t\eta} P(X_t\in B).
$$
\end{thrm}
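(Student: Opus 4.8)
The plan is to establish the three parts in order, leveraging the characteristic function computation already given for $V_t$ together with the L\'evy–Khintchine representation. For Part 1, I would first observe that $X = \tilde X + V$ is a sum of two independent L\'evy processes, hence is itself a L\'evy process (independent and stationary increments are inherited, and stochastic continuity of a compound Poisson process is standard). To identify $X_1 \sim \mu$, I would multiply the characteristic functions: $\phi_{\tilde\mu}(z)\,\rE[e^{i\langle V_1,z\rangle}]$. Writing out $C_{\tilde\mu}(z)$ with L\'evy measure $\tilde L = gL$ and shift $\tilde b = b - \int x h(x)(1-g(x))L(\rd x)$, and adding the exponent $\int_{\mathbb R^d}(e^{i\langle x,z\rangle}-1)\rho(\rd x) = \int_{\mathbb R^d}(e^{i\langle x,z\rangle}-1)(1-g(x))L(\rd x)$, I would check that the $\tilde L$-integral plus the $\rho$-integral recombine to the full $L$-integral in $C_\mu(z)$, while the extra $-i\langle z,x\rangle h(x)$ terms needed in the full integral are exactly supplied by the shift correction $i\langle z, \tilde b - b\rangle = -i\langle z, \int x h(x)(1-g(x))L(\rd x)\rangle$. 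Assumption A2 is what makes all these integrals converge absolutely so the splitting and recombining is legitimate. This gives $\phi_\mu(z) = \exp\{C_\mu(z)\}$, so $X_1 \sim \mu$.

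Part 2 is essentially immediate from the construction: on the event $\{t < T\}$ we have $N_t = 0$ (since $T = \inf\{t : N_t > 0\}$ and $N$ is nondecreasing, integer-valued, and right-continuous with $N_0 = 0$), hence $V_t = \sum_{i=1}^{0} Z_i = 0$, and therefore $X_t = \tilde X_t + 0 = \tilde X_t$. One should note this is a pathwise statement, valid for all $t$ in the random interval $[0,T)$ simultaneously.

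For Part 3, the key is to condition on $T$ and use the independence of $\tilde X$ and $V$ (equivalently, of $\tilde X$ and $(N, (Z_i))$). Fix $B$ and $t$. I would write
\[
P(X_t \in B) \ge P(X_t \in B,\ T > t) = P(\tilde X_t \in B,\ T > t) = P(\tilde X_t \in B)\,P(T > t) = e^{-t\eta}\,P(\tilde X_t \in B),
\]
where the second equality uses Part 2 (on $\{T>t\}$, $X_t = \tilde X_t$), the third uses independence of $\tilde X_t$ from $T$ (which is a functional of $V$ alone), and the last uses \eqref{eq: distrib of T}. Rearranging gives $P(\tilde X_t \in B) \le e^{t\eta} P(X_t \in B)$, as claimed.

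I do not expect any serious obstacle here; the one step demanding care is the characteristic-function bookkeeping in Part 1, where one must be precise about which integrals converge on their own (this is where A2 enters, not merely A1) so that the telescoping of the $\tilde L$ and $\rho$ contributions, together with the shift adjustment absorbing the truncation terms, is fully rigorous rather than formal. Everything in Parts 2 and 3 is then a short, clean consequence of the construction and independence.
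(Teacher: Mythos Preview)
Your proposal is correct and follows essentially the same route as the paper: Part~1 by multiplying characteristic functions and recombining the $\tilde L$- and $\rho$-integrals (with the shift correction absorbing the $h$-terms), Part~2 from $V_t=0$ on $\{t<T\}$, and Part~3 by the same chain $P(X_t\in B)\ge P(X_t\in B,\,T>t)=P(\tilde X_t\in B,\,T>t)=P(\tilde X_t\in B)e^{-t\eta}$. The only difference is cosmetic ordering; your remark that A2 is what makes the splitting in Part~1 rigorous is exactly the point.
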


This theorem implies that the process $\tilde X$ is obtained from $X$ by throwing out the jumps that are governed by $V_t$. We call $\tilde X$ the tempered L\'evy process, and, in this context, we refer to $X$ as the original process.

\begin{proof}
We begin with the first part. Since $\tilde X$ and $V$ are independent L\'evy processes and the sum of independent L\'evy processes is still a L\'evy process, it suffices to check that the characteristic function of $X_t$ is the same as the characteristic function of $\tilde X_t + V_t$. The characteristic function of $\tilde X_t+V_t$ can be written as $e^{t C(z)}$, where for $z\in\mathbb R^d$
\begin{eqnarray*}
C(z) &=& -\frac{1}{2}\langle z,Az\rangle + \int_{\mathbb R^{d}}\left(e^{i\langle x,z\rangle}-1-i\langle x,z\rangle h(x)\right) \tilde L(\rd x)+i\langle \tilde b,z\rangle \\
&&\quad+ \int_{\mathbb R^d} \left(e^{i\langle x,z\rangle}-1\right) \rho(\rd x) \\
&=&  -\frac{1}{2}\langle z,Az\rangle + \int_{\mathbb R^{d}}\left(e^{i\langle x,z\rangle}-1-i\langle x,z\rangle h(x)\right) g(x) L(\rd x)+ i\langle b,z\rangle \\
&&\quad - \int_{\mathbb R^{d}} i\langle x,z\rangle h(x)\left(1-g(x)\right)  L(\rd x) \\
&&\quad + \int_{\mathbb R^{d}} \left(e^{i\langle x,z\rangle}-1\right)\left(1-g(x)\right)  L(\rd x)\\
&=& -\frac{1}{2}\langle z,Az\rangle +  \int_{\mathbb R^{d}}\left(e^{i\langle x,z\rangle}-1-i\langle x,z\rangle h(x)\right) L(\rd x) + i\langle b,z\rangle,
\end{eqnarray*}
as required. The second part follows immediately from the fact that $V_t=0$ when $0\le t<T$. We now turn to the third part. Since $\tilde X$ and $T$ are independent, for any $B\in\mathfrak B(\mathbb R^d)$ we can use the second part to get
\begin{eqnarray*}
P(\tilde X_t\in B)P(T>t) = P(\tilde X_t\in B, T>t) = P(X_t\in B, T>t)\le P( X_t\in B).
\end{eqnarray*}
From here, the result follows by \eqref{eq: distrib of T}.
\end{proof}

\section{Rejection Sampling}\label{sec: sampling}

In this section, we set up a rejection sampling scheme for sampling from the tempered L\'evy process $\tilde X$, when we know how to sample from the original L\'evy process $X$. To begin with, assume that, for some $t>0$ we want to simulate $\tilde X_t$. For our approach to work, we need the distributions of both $\tilde X_t$ and $X_t$ to be absolutely continuous with respect to Lebesgue measure on $\mathbb R^d$. Thus, each distribution must have a probability density function (pdf). This always holds, for instance, if $A$ is an invertible matrix or if both
$$
\int_{\mathbb R^d}\left|\phi_{\mu}(z)\right|^t\rd z<\infty \mbox{ and }\int_{\mathbb R^d}\left|\phi_{\tilde\mu}(z)\right|^t\rd z<\infty.
$$
More delicate conditions, in terms of the corresponding L\'evy measures, can be found in Section 27 of \cite{Sato:1999}. 

Now assume that Assumptions A1 and A2 hold, and let $f_t$ and $\tilde f_t$ be the pdfs of $X_t$ and $\tilde X_t$, respectively. Since the inequality in the third part of Theorem \ref{thrm: repres} holds for all Borel sets, it holds for pdfs as well. Thus, for Lebesgue almost every $x$,
$$
\tilde f_t(x) \le e^{t\eta} f_t(x),
$$
where
$$
\eta = \rho(\mathbb R^d) = \int_{\mathbb R^d}\left(1-g(x)\right)L(\rd x).
$$
This means that we can set up a rejection sampling algorithm (see \cite{Devroye:1986}) to sample from $\tilde f_t$ as follows. \\

\noindent\textbf{Algorithm 1.}\\
\textbf{Step 1.} Independently simulate $U\sim U(0,1)$ and $Y\sim f_t$.\\
\textbf{Step 2.}  If $U\le e^{-\eta t}\tilde f_t(Y)/f_t(Y)$ accept, otherwise reject and go back to Step 1.\\

Let $p_t$ be the probability of acceptance on a given iteration and let $I_t$ be the expected number of iterations until the first acceptance. By a simple conditioning argument, it follows that $p_t = e^{-\eta t}$, and hence that $I_t=e^{\eta t}$. Note that both of these quantities approach $1$ as $t\to0$. On the other hand, when $t\to\infty$, we have $p_t\to0$ and $I_t\to\infty$. Thus, this method works best for small $t$. 

We now describe how to use Algorithm 1 to simulate the tempered L\'evy process $\tilde X=\{\tilde X_t:t\ge0\}$ on a finite mesh. For simplicity, assume that the mesh points are evenly spaced; the general case can be dealt with in a similar manner. Thus, fix $\Delta>0$ and assume that we want to simulate $\tilde X_{\Delta}, \tilde X_{2\Delta}, \dots, \tilde X_{n\Delta}$ for some $n\in\mathbb N$. To do this, we begin by simulating $n$ independent increments $Y_1,Y_2,\dots,Y_n\iid \tilde f_{\Delta}$ using Algorithm 1.  We expect this to take $nI_\Delta=ne^{\Delta \eta}$ iterations.  Now, to get values of the process, we set
\begin{eqnarray}\label{eq: get process}
\tilde X_{k\Delta} = \sum_{j=1}^k Y_j, \ \ k=1,2,\dots,n.
\end{eqnarray}

Next, consider the case, when we only want to simulate $\tilde X_t\sim \tilde f_t$ for some fixed $t>0$. While this can be done directly using Algorithm 1, when  $t$ is large, the expected number of iterations, $I_t=e^{\eta t}$, is large as well. Instead, we can choose some $n\in\mathbb N$ and sample $n$ independent increments $Y_{1},Y_{2},\dots,Y_{n}\iid\tilde f_{t/n}$. Then the sum $\tilde X_t=Y_{1}+Y_{2}+\cdots+Y_{n}$ has distribution $\tilde f_{t}$. In this case, we only expect to need $nI_{t/n}=ne^{\eta t/n}$ iterations. This requires fewer iterations so long as $n<e^{t\eta(1-1/n)}$. Since the function $b(x)=xe^{t\eta/x}$ is minimized (on $x>0$) at $x=t\eta$, it follows that the optimal choice of $n$ is near this value.  As an example, assume that we want to simulate one observation when $t=10$ and $\eta=1$. To do this directly, we expect to need $I_{10}= e^{10}\approx22026$ iterations. On the other hand, to simulate $10$ observations when $t=1$ and $\eta=1$ we only expect to need $10I_1 = 10*e^1\approx 27$ iterations. Thus, the second approach is much more efficient, in this case.

\section{Tempered Stable Distributions}\label{sec: ts}

Most, if not all, tempered L\'evy processes that have appeared in the literature, are those associated with tempered stable distributions. Before discussing these, we recall that an infinite variance stable distribution on $\mathbb R^d$ is an infinitely divisible distribution with no Gaussian part and a L\'evy measure of the form
\begin{eqnarray}\label{eq: Levy stable}
M_\alpha(B) = \int_{\mathbb S^{d-1}}\int_0^\infty 1_{B}(t\xi) r^{-1-\alpha} \rd r \sigma(\rd \xi), \ \ B\in\mathfrak B(\mathbb R^d),
\end{eqnarray}
where $\alpha\in(0,2)$ and $\sigma$ is a finite Borel measure on $\mathbb S^{d-1}$. For these distributions we will use the $h$-function
\begin{eqnarray}\label{eq: h for stable}
h_\alpha(x) = \left\{\begin{array}{ll}
0& \alpha\in(0,1)\\
1_{[|x|\le1]} & \alpha=1\\
1 & \alpha\in(1,2)
\end{array}\right..
\end{eqnarray}
We denote the distribution $ID(0,M_\alpha,b)_{h_\alpha}$ by $\mathrm S_\alpha(\sigma,b)$. For more on stable distributions, see the classic text  \cite{Samorodnitsky:Taqqu:1994}.

Now, following \cite{Rosinski:Sinclair:2010}, we define a tempered stable distribution on $\mathbb R^d$ as an infinitely divisible distribution with no Gaussian part and a L\'evy measure of the form
$$
\tilde M_\alpha(B) = \int_{\mathbb S^{d-1}}\int_0^\infty 1_{B}(t\xi) r^{-1-\alpha} q(r,\xi)\rd r \sigma(\rd \xi), \ \ B\in\mathfrak B(\mathbb R^d),
$$
where $\alpha\in(0,2)$, $q:(0,\infty)\times\mathbb S^{d-1}\mapsto[0,\infty)$ is a Borel function, and $\sigma$ is a finite Borel measure on $\mathbb S^{d-1}$. Here, the tempering function is $g(x) = q(|x|,x/|x|)$. Using the $h$-function given by \eqref{eq: h for stable}, we denote the distribution $ID(0,\tilde M_\alpha,b)_{h_\alpha}$ by $\mathrm{TS}_\alpha(\sigma,q,b)$. In this case, 
$$
\rho(B) = \int_{\mathbb S^{d-1}}\int_0^\infty 1_{B}(r\xi) r^{-1-\alpha}\left(1-q(r,\xi)\right) \rd r \sigma(\rd \xi), \ \ B\in\mathfrak B(\mathbb R^d),
$$
and Assumptions A1 and A2 become:
\begin{itemize}
\item[\textbf{B1.}] $0\le q(r,\xi) \le 1$ for all $r>0$ and $\xi\in\mathbb S^{d-1}$, and
\item[\textbf{B2.}] 
\begin{eqnarray*}
\int_{\mathbb S^{d-1}}\int_0^\infty r^{-1-\alpha}\left(1-q(r,\xi)\right) \rd r \sigma(\rd \xi)<\infty.
\end{eqnarray*}
\end{itemize}
Note that Assumption B2 implies that, for $\sigma$ almost every $\xi$, we have
\begin{eqnarray}\label{eq: q at 0}
\lim_{r\downarrow0}q(r,\xi)=1.
\end{eqnarray}

Next, we turn to the question of when both $\mathrm S_\alpha(\sigma,b)$ and $\mathrm{TS}_\alpha(\sigma,q,b)$ are absolutely continuous with respect to Lebesgue measure on $\mathbb R^d$. Before characterizing when this holds, we recall the following definition. The support of $\sigma$ is the collection of all points $\xi\in\mathbb S^{d-1}$ such that, for any open set $G\subset \mathbb S^{d-1}$ with $\xi\in G$, we have $\sigma(G)>0$. 

\begin{prop}\label{prop: abs cont} 
If the support of $\sigma$ contains $d$ linearly independent vectors, then both $\mathrm S_\alpha(\sigma,b)$ and $\mathrm{TS}_\alpha(\sigma,q,b)$ are absolutely continuous with respect to Lebesgue measure on $\mathbb R^d$.
\end{prop}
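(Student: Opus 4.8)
The plan is to establish the stable case first and deduce the tempered stable case from it via Theorem~\ref{thrm: repres}. For $\mathrm S_\alpha(\sigma,b)$ I would show that its characteristic function lies in $L^1(\mathbb R^d)$, so that the Fourier inversion theorem yields a bounded continuous density. There is no Gaussian part, and for every $\alpha\in(0,2)$ one has $\mathrm{Re}\big(e^{i\langle z,x\rangle}-1-i\langle z,x\rangle h_\alpha(x)\big)=\cos\langle z,x\rangle-1$, so the modulus of the characteristic function depends only on the radial integral; after the substitution $v=r|\langle z,\xi\rangle|$ it becomes
\begin{eqnarray*}
\left|\phi_{\mathrm S_\alpha(\sigma,b)}(z)\right| = \exp\left\{-c_\alpha\int_{\mathbb S^{d-1}}|\langle z,\xi\rangle|^\alpha\,\sigma(\rd\xi)\right\},
\end{eqnarray*}
where $c_\alpha=\int_0^\infty(1-\cos v)v^{-1-\alpha}\,\rd v\in(0,\infty)$.

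The crux is then the lower bound $\int_{\mathbb S^{d-1}}|\langle z,\xi\rangle|^\alpha\,\sigma(\rd\xi)\ge c\,|z|^\alpha$ for some $c>0$ and all $z\in\mathbb R^d$. Writing $F(z)$ for the left-hand side, $F$ is continuous and positively homogeneous of degree $\alpha$, so by compactness of $\mathbb S^{d-1}$ it suffices to show $F(z)>0$ whenever $z\ne0$. If $F(z)=0$, then $\langle z,\xi\rangle=0$ for $\sigma$-almost every $\xi$; since $\{\xi\in\mathbb S^{d-1}:\langle z,\xi\rangle\ne0\}$ is open, it must be disjoint from the support of $\sigma$, so $\langle z,\cdot\rangle$ vanishes at the $d$ linearly independent vectors contained in that support, forcing $z=0$. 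The bound then gives $\left|\phi_{\mathrm S_\alpha(\sigma,b)}(z)\right|^t\le e^{-t c\, c_\alpha|z|^\alpha}$ for every $t>0$, which is integrable over $\mathbb R^d$ because $\alpha>0$; in particular $\mathrm S_\alpha(\sigma,b)$ has a bounded continuous density.

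For $\mathrm{TS}_\alpha(\sigma,q,b)$ I would invoke Theorem~\ref{thrm: repres}. Under Assumptions B1 and B2, the function $g(x)=q(|x|,x/|x|)$ satisfies A1 and A2, so applying the theorem with $\mu=\mathrm S_\alpha(\sigma,b)$ produces a process $\tilde X$ with $\tilde X_1\sim\tilde\mu:=ID(0,\tilde M_\alpha,\tilde b)_{h_\alpha}$ and $P(\tilde X_1\in B)\le e^{\eta}P(X_1\in B)$ for all $B\in\mathfrak B(\mathbb R^d)$, where $X_1\sim\mathrm S_\alpha(\sigma,b)$. Since $\mathrm S_\alpha(\sigma,b)$ is absolutely continuous by the first part, every Lebesgue-null set is $\mathrm S_\alpha(\sigma,b)$-null and hence $\tilde\mu$-null, so $\tilde\mu$ is absolutely continuous. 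Finally, $\mathrm{TS}_\alpha(\sigma,q,b)=ID(0,\tilde M_\alpha,b)_{h_\alpha}$ has the same Gaussian part and L\'evy measure as $\tilde\mu$, so it is the translate of $\tilde\mu$ by $b-\tilde b$; since translation preserves absolute continuity, $\mathrm{TS}_\alpha(\sigma,q,b)$ is absolutely continuous as well.

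I expect the main obstacle to be the stable case — specifically, the uniform lower bound on $F(z)$, together with checking that the modulus computation really is uniform across the three regimes of $\alpha$, where $h_\alpha$ differs. Once the stable density exists, the tempered stable case is nearly immediate. A direct alternative for the tempered case would be to bound $\left|\phi_{\mathrm{TS}_\alpha(\sigma,q,b)}\right|$ and use \eqref{eq: q at 0} to show the radial integral still grows like $|z|^\alpha$; this seems messier, since the rate at which $q(r,\xi)\to1$ near $0$ depends on $\xi$, so routing through Theorem~\ref{thrm: repres} is cleaner.
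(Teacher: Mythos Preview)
Your proof is correct, but the route differs from the paper's. The paper handles both distributions simultaneously by invoking general results from \cite{Sato:1999} (Proposition~24.17 and Theorem~27.10), which reduce absolute continuity of a purely non-Gaussian infinitely divisible law with a polar L\'evy measure to showing that the radial component has infinite mass for $\sigma$-a.e.\ direction; for $M_\alpha$ this is $\int_0^\infty r^{-1-\alpha}\rd r=\infty$, and for $\tilde M_\alpha$ it is $\int_0^\infty r^{-1-\alpha}q(r,\xi)\,\rd r=\infty$, which follows from \eqref{eq: q at 0}. Your stable argument is more hands-on and yields more --- an explicit exponential bound on $|\phi_\mu|$ and hence a bounded continuous density --- while the paper's citation-based proof gives only absolute continuity but in two lines. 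For the tempered case you take a different shortcut, bootstrapping from the stable case via the domination in Theorem~\ref{thrm: repres}(3); this is neat and avoids estimating $|\phi_{\tilde\mu}|$ directly, but it does require Assumptions~B1 and~B2 to invoke the theorem, whereas the paper's proof needs only \eqref{eq: q at 0} (itself a consequence of~B2). Either way the dependence on B2 is present; your approach just makes it more visible.
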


In particular, when $d=1$, this holds so long as $\sigma\ne0$.

\begin{proof}
By Proposition 24.17 and Theorem 27.10 in \cite{Sato:1999}, it suffices to show that, for $\sigma$ almost every $\xi\in\mathbb S^{d-1}$,
$$
\int_0^\infty r^{-1-\alpha} \rd r =\infty \mbox{ and } \int_0^\infty r^{-1-\alpha} q(r,\xi) \rd r =\infty.
$$
The first of these is immediately, while the second follows from \eqref{eq: q at 0}.
\end{proof}

In light of this proposition, we introduce the third assumption:
\begin{itemize}
\item[\textbf{B3.}] The support of $\sigma$ contains $d$ linearly independent vectors.
\end{itemize}

We now specialize our main results to the case of tempered stable distributions.

\begin{cor}\label{cor: main for ts}
Let $\mu=\mathrm S_\alpha(\sigma,b)$, $\tilde\mu=\mathrm{TS}_\alpha(\sigma,q,\tilde b)$, where 
$$
\tilde b = b - \int_{\mathbb S^{d-1}} \int_0^\infty  h_\alpha(r)\left(1-q(r,\xi)\right) r^{-\alpha} \rd r \xi \sigma(\rd \xi),
$$
If Assumptions B1 and B2 hold, then the results of Theorem \ref{thrm: repres} hold. If, in addition, Assumption B3 holds, then we can use Algorithm 1 to simulate from the L\'evy process associated with $\tilde\mu$.
\end{cor}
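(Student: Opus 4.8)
The plan is to read this corollary off Theorem~\ref{thrm: repres} and Proposition~\ref{prop: abs cont}; essentially all that is needed is to rewrite Assumptions~A1--A2 and the formula for $\tilde b$ in polar coordinates and then invoke those two results.

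First I would note that $\mu=\mathrm S_\alpha(\sigma,b)=ID(0,M_\alpha,b)_{h_\alpha}$, so here $L=M_\alpha$, $h=h_\alpha$, and the tempering function is $g(x)=q(|x|,x/|x|)$, whence $\tilde L=gL=\tilde M_\alpha$ and $\tilde\mu=ID(0,\tilde M_\alpha,\tilde b)_{h_\alpha}$. Passing any integral against $M_\alpha$ to polar coordinates via \eqref{eq: Levy stable}, Assumption~A1 for $g$ is exactly Assumption~B1 for $q$. For Assumption~A2, substituting $x=r\xi$, using $|\xi|=1$ and (abusing notation for the radial version) $h_\alpha(r\xi)=h_\alpha(r)$, turns the integral into
\begin{eqnarray*}
\int_{\mathbb S^{d-1}}\int_0^\infty\left(\left(r\,h_\alpha(r)\right)\vee 1\right)\left(1-q(r,\xi)\right)r^{-1-\alpha}\,\rd r\,\sigma(\rd\xi).
\end{eqnarray*}
I would then check the three cases of \eqref{eq: h for stable} separately. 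For $\alpha\in(0,1]$ one has $r\,h_\alpha(r)\le 1$ for every $r$, so the bracket equals $1$ and the integral is precisely the one appearing in B2. For $\alpha\in(1,2)$ the bracket is $r\vee 1$; splitting the radial integral at $r=1$, the part over $(0,1)$ is dominated by the B2 integral, while the part over $(1,\infty)$ is at most $\int_{\mathbb S^{d-1}}\sigma(\rd\xi)\int_1^\infty r^{-\alpha}\,\rd r<\infty$, since $1-q\le 1$ and $\alpha>1$. This case split is the only spot that is not pure bookkeeping, and I expect it to be the (mild) main obstacle.

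Next I would verify the shift: rewriting $\int_{\mathbb R^d}x\,h(x)(1-g(x))L(\rd x)$ in polar coordinates gives $\int_{\mathbb S^{d-1}}\int_0^\infty h_\alpha(r)(1-q(r,\xi))r^{-\alpha}\,\rd r\,\xi\,\sigma(\rd\xi)$ (finite as a byproduct of the A2 check), which is exactly the correction term defining $\tilde b$ in the statement. Hence Assumptions~A1 and~A2 hold with $\mu$, $g$, and $\tilde b$ as given, and Theorem~\ref{thrm: repres} applies without change, yielding the first assertion.

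For the second assertion, Algorithm~1 requires only that $X_t$ and $\tilde X_t$ admit pdfs for the relevant $t>0$. Since the characteristic function of $X_t$ is $\phi_\mu(z)^t=\exp\{tC_\mu(z)\}$, one has $X_t\sim\mu^t=\mathrm S_\alpha(t\sigma,tb)$ and, likewise, $\tilde X_t\sim\tilde\mu^t=\mathrm{TS}_\alpha(t\sigma,q,t\tilde b)$. The key observation is that for $t>0$ the measure $t\sigma$ has the same support as $\sigma$, so Assumption~B3 is inherited by $t\sigma$; Proposition~\ref{prop: abs cont} then gives that both $\mu^t$ and $\tilde\mu^t$ are absolutely continuous with respect to Lebesgue measure, so the densities $f_t$ and $\tilde f_t$ exist and Algorithm~1 is applicable. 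This completes the proof.
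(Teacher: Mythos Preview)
Your proof is correct and supplies exactly the details the paper omits: the corollary is stated there without proof, being regarded as an immediate specialization of Theorem~\ref{thrm: repres} and Proposition~\ref{prop: abs cont}. Your case split showing that B2 implies A2 (the only nontrivial point being the extra factor $r\vee1$ when $\alpha\in(1,2)$, handled via $\int_1^\infty r^{-\alpha}\,\rd r<\infty$) and your observation that $t\sigma$ has the same support as $\sigma$ so that Proposition~\ref{prop: abs cont} applies to $\mu^t$ and $\tilde\mu^t$ are precisely the verifications needed.
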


In particular, this means that, when Assumptions B1 and B2 hold, if $\{X_t:t\ge0\}$ is a L\'evy process with $X_1\sim\mu$ and $\{\tilde X_t:t\ge0\}$ is a L\'evy process with $\tilde X_1\sim\tilde\mu$, then $X_t=\tilde X_t$ for $0\le t< T$, where $T$ is as in Theorem \ref{thrm: repres}. This strengthens the well-known fact that tempered stable L\'evy processes behave like stable L\'evy processes in a short time frame, see \cite{Rosinski:Sinclair:2010}. Algorithm 1 requires the ability to sample from a stable distribution. In the one dimensional case, this is easily done using the classical method of  \cite{Chambers:Mallows:Stuck:1976}. In the multivariate case, this problem has not been fully resolved, however a method for simulating from a dense class of multivariate stable distributions is given in \cite{Nolan:1998}.

\section{$p$-Tempered $\alpha$-Stable Distributions}\label{sec: pts}

In the previous section, we allowed for tempered stable distributions with very general tempering functions. However, it is often convenient to work with families of tempering functions, which have additional structure. One such family, which is commonly used, corresponds to the case, where
\begin{eqnarray}\label{eq: q for p temp}
q(r,\xi) = \int_{(0,\infty)} e^{-sr^p} Q_\xi(\rd s).
\end{eqnarray}
Here $p>0$ and $\{Q_\xi:\xi\in\mathbb S^{d-1}\}$ is a measurable family of probability measures on $(0,\infty)$. For fixed $\alpha\in(0,2)$, the corresponding tempered stable distributions are called $p$-tempered $\alpha$-stable. For $p=1$ these were introduced in \cite{Rosinski:2007}, for $p=2$ they were introduced in \cite{Bianchi:Rachev:Kim:Fabozzi:2011}, and the general case was introduced in \cite{Grabchak:2012}. See also the recent monograph \cite{Grabchak:2016book}. 

Now, consider the distribution $\mathrm{TS}_\alpha(\sigma,q,b)$, where $q$ is as in \eqref{eq: q for p temp}, and define the measures
$$
Q(B) = \int_{\mathbb S^{d-1}}\int_{(0,\infty)} 1_{B}(s\xi) Q_\xi(\rd s)\sigma(\rd \xi), \ \ B\in\mathfrak B(\mathbb R^d)
$$
and
$$
R(B) = \int_{\mathbb R^{d}} 1_{B}\left(\frac{x}{|x|^{1+1/p}}\right) |x|^{\alpha/p}Q(\rd x), \ \ B\in\mathfrak B(\mathbb R^d).
$$
We call $R$ the Rosi\'nski measure of the distribution. For fixed $p>0$ and $\alpha\in(0,2)$, $R$ uniquely determines $q$ and $\sigma$. Further, Proposition 3.6 in \cite{Grabchak:2016book} implies that we can recover $\sigma$ by
\begin{eqnarray}\label{eq: back to sigma}
\sigma(B) = \int_{\mathbb R^d}1_B\left(\frac{x}{|x|}\right)|x|^\alpha R(\rd x), \ \ B\in\mathfrak B(\mathbb S^{d-1}).
\end{eqnarray}
Due to the importance of the Rosi\'nski measure, we sometimes denote the distribution $\mathrm{TS}_\alpha(\sigma, q,b)$, where $q$ is of the form \eqref{eq: q for p temp}, by $\mathrm{TS}_\alpha^p(R,b)$. We now characterize when Assumptions B1 and B2 hold.

\begin{prop}\label{prop: ets pts}
Consider the distribution $\mathrm{TS}_\alpha(\sigma,q,b)$, where $q$ is of the form \eqref{eq: q for p temp} and let $R$ be the corresponding Rosi\'nski measure. This distribution always satisfies Assumption B1. Further, it satisfies Assumption B2 if and only if $0<\alpha<p$ and $R$ is a finite measure on $\mathbb R^d$. In this case
$$
\eta =  \frac{\Gamma(1-\alpha/p)}{\alpha} R(\mathbb R^d).
$$
\end{prop}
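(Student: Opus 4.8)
The plan is to dispose of Assumption B1 immediately and to reduce Assumption B2 to the evaluation of a single one-dimensional integral. For B1: for every $r>0$ and $s>0$ we have $e^{-sr^p}\in(0,1)$, so $q(r,\xi)=\int_{(0,\infty)}e^{-sr^p}Q_\xi(\rd s)$ is an average of numbers in $(0,1)$ against the probability measure $Q_\xi$, and therefore lies in $[0,1]$.

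For B2, I would first write $1-q(r,\xi)=\int_{(0,\infty)}\bigl(1-e^{-sr^p}\bigr)Q_\xi(\rd s)$ and substitute this into the integral appearing in B2. Every integrand in sight is nonnegative, so Tonelli's theorem permits interchanging the $\rd r$, $Q_\xi(\rd s)$, and $\sigma(\rd\xi)$ integrations freely. Everything then comes down to evaluating, for fixed $s>0$,
$$
J(s):=\int_0^\infty r^{-1-\alpha}\bigl(1-e^{-sr^p}\bigr)\,\rd r .
$$
The change of variables $u=sr^p$ turns this into $J(s)=\frac{s^{\alpha/p}}{p}\int_0^\infty u^{-\alpha/p-1}(1-e^{-u})\,\rd u$. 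The remaining integral converges precisely when $0<\alpha/p<1$: near $u=0$ one has $1-e^{-u}\sim u$, so integrability there requires $\alpha/p<1$, while near $u=\infty$ integrability holds because $\alpha/p>0$. When $0<\alpha<p$, integration by parts (moving the derivative onto $1-e^{-u}$) gives $\int_0^\infty u^{-\alpha/p-1}(1-e^{-u})\,\rd u=\frac{p}{\alpha}\Gamma(1-\alpha/p)$, so $J(s)=\frac{\Gamma(1-\alpha/p)}{\alpha}\,s^{\alpha/p}$; when $\alpha\ge p$ the integral diverges, so $J(s)=\infty$ for every $s>0$.

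Substituting back, the B2 integral equals $\frac{\Gamma(1-\alpha/p)}{\alpha}\int_{\mathbb S^{d-1}}\int_{(0,\infty)}s^{\alpha/p}Q_\xi(\rd s)\,\sigma(\rd\xi)$ when $\alpha<p$, and equals $+\infty$ when $\alpha\ge p$ (here one uses that each $Q_\xi$ is a probability measure and, as is standard for such distributions, $\sigma\ne0$). It remains to identify the double integral with $R(\mathbb R^d)$: the definition of $Q$ gives $\int_{\mathbb S^{d-1}}\int_{(0,\infty)}s^{\alpha/p}Q_\xi(\rd s)\,\sigma(\rd\xi)=\int_{\mathbb R^d}|x|^{\alpha/p}Q(\rd x)$, and since $Q$ assigns no mass to the origin, taking $B=\mathbb R^d$ in the definition of $R$ yields $R(\mathbb R^d)=\int_{\mathbb R^d}|x|^{\alpha/p}Q(\rd x)$. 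Combining these facts, B2 holds if and only if $0<\alpha<p$ and $R(\mathbb R^d)<\infty$, and in that case $\eta$ — which is by definition the value of the B2 integral — equals $\frac{\Gamma(1-\alpha/p)}{\alpha}R(\mathbb R^d)$.

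None of this is deep; the steps that call for a little care are the convergence analysis of $\int_0^\infty u^{-\alpha/p-1}(1-e^{-u})\,\rd u$ at both endpoints, together with the dichotomy $\alpha<p$ versus $\alpha\ge p$, and the bookkeeping that rewrites the iterated integral over $\mathbb S^{d-1}\times(0,\infty)$ as an integral against the Rosi\'nski measure $R$.
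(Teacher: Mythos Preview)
Your proof is correct and follows essentially the same route as the paper's: both handle B1 by the probability-measure averaging observation, and both reduce B2 to the one-dimensional integral $\int_0^\infty u^{-\alpha/p-1}(1-e^{-u})\,\rd u$ via the substitution $u=sr^p$, evaluate it by integration by parts when $\alpha<p$, show divergence when $\alpha\ge p$, and then identify the remaining $(s,\xi)$-integral with $R(\mathbb R^d)$. You are slightly more explicit than the paper in invoking Tonelli and in noting that $\sigma\ne0$ is needed for the $\alpha\ge p$ divergence to propagate, but the argument is the same.
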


\begin{proof}
The fact that Assumption B1 always holds follows from \eqref{eq: q for p temp} and the fact that $Q_\xi$ is a probability measure for every $\xi\in\mathbb S^{d-1}$. We now turn to Assumption B2. 
First assume that $0<\alpha<p$. In this case, the fact that $\{Q_\xi:\xi\in\mathbb S^{d-1}\}$ is a collection of probability measures implies that
\begin{eqnarray*}
 \int_0^\infty r^{-1-\alpha} \left(1-q(r,\xi)\right) \rd r &=& \int_{(0,\infty)}\int_0^\infty r^{-1-\alpha} \left(1-e^{-sr^p}\right)\rd r Q_\xi(\rd s)\\
&=& \int_{(0,\infty)}\frac{s^{\alpha/p}}{p}\int_0^\infty \left(1-e^{-r}\right) r^{-1-\alpha/p}\rd r Q_\xi(\rd s)\\
&=&\int_{(0,\infty)} s^{\alpha/p} Q_\xi(\rd s) \frac{\Gamma(1-\alpha/p)}{\alpha},
\end{eqnarray*}
where the second line follows by substitution and the third by integration by parts. It follows that
\begin{eqnarray*}
\eta &=& \int_{\mathbb S^{d-1}}\int_0^\infty r^{-1-\alpha}\left(1-q(r,\xi)\right) \rd r \sigma(\rd \xi)\\
&=& \frac{\Gamma(1-\alpha/p)}{\alpha} \int_{\mathbb S^{d-1}} \int_{(0,\infty)} s^{\alpha/p} Q_\xi(\rd s) \sigma(\rd \xi)\\
&=& \frac{\Gamma(1-\alpha/p)}{\alpha} \int_{\mathbb R^{d}} |x|^{\alpha/p} Q(\rd x)\\
&=& \frac{\Gamma(1-\alpha/p)}{\alpha} R(\mathbb R^d),
\end{eqnarray*}
which is finite if and only if $R(\mathbb R^d)<\infty$. Now assume that $\alpha\ge p$. We again have
\begin{eqnarray*}
 \int_0^\infty r^{-1-\alpha} \left(1-q(r,\xi)\right) \rd r 
&=& \int_{(0,\infty)}\frac{s^{\alpha/p}}{p}Q_\xi(\rd s)\int_0^\infty \left(1-e^{-r}\right) r^{-1-\alpha/p}\rd r.
\end{eqnarray*}
To see that  Assumption B2 does not hold in this case, observe that
$$
\int_0^\infty \left(1-e^{-r}\right) r^{-1-\alpha/p}\rd r \ge .5\int_0^1 r^{-\alpha/p}\rd r \ =\infty,
$$
where we use the fact that, for $r\in[0,1]$, $(1-e^{-r})\ge .5r$, see e.g.\ 4.2.37 in \cite{Abramowitz:Stegun:1972}.
\end{proof}

\begin{remark}
Perhaps, the most famous $p$-tempered $\alpha$-distributions are Tweedie distributions, which were introduced in \cite{Tweedie:1984}. These are sometimes also called classical tempered stable subordinators. These are one-dimensional distributions with $p=1$, $\alpha\in(0,1)$, $\sigma(\{-1\})=0$, $\sigma(\{1\})=a$, and $Q_{1}(\rd t) = \delta_c(\rd t)$, where $a,c>0$ and $\delta_c$ is the point-mass at $c$. In this case $R(\rd t) = ac^\alpha\delta_{1/c}(\rd t)$. It is not difficult to show that such distributions satisfy Assumptions B1, B2, and B3 and thus that Algorithm 1 can be used. Further, in this case, $\eta=\Gamma(1-\alpha)ac^\alpha/\alpha$, and it can be shown that $e^{-\eta t}f_t(Y)/\tilde f_t(Y) = e^{-cY}$. Thus, in this case, Algorithm 1 is computationally easy to perform. In fact, it reduces to the standard rejection sampling algorithm for Tweedie distributions given in e.g.\ \cite{Kawai:Masuda:2011}.
\end{remark}

\section{Simulations}\label{sec: sims}

In this section we perform a small scale simulation study to see how well Algorithm 1 works in practice. We focus on a parametric family of tempered stable distributions for which, up to now, there has not been an exact simulation method except for the inversion method. Specifically, we consider the family of $p$-tempered $\alpha$-stable distribution on $\mathbb R^1$ with Rosi\'nski measures of the form
$$
R_\ell(\rd x) = C (1+|x|)^{-2-\alpha-\ell}\rd x,
$$
where $\ell>0$ is a parameter and
$$
C = .5(\alpha+\ell+1)\frac{\alpha}{\Gamma(1-\alpha/p)}.
$$
These distributions were introduced in \cite{Grabchak:2016book} as a class of tempered stable distributions, which can have a finite variance, but still fairly heavy tails. If fact, if random variable $Y$ has a distribution of this type, then, for $\beta\ge0$,
$$
\rE|Y|^\beta<\infty \mbox{ if and only if } \beta<1+\alpha+\ell.
$$ 
Thus, $\ell$ controls how heavy the tails of the distribution are. Distributions with tails of this type are useful, for instance, in modeling financial returns, since, as is well-known, returns tend to have heavy tails, but the fact that they exhibit aggregational Gaussianity suggests that their tails cannot be too heavy and that the variance should be finite, see e.g.\ \cite{Cont:Tankov:2004} or  \cite{Grabchak:Samorodnitsky:2010}. 

Since $R_\ell$ is a finite measure, Proposition \ref{prop: ets pts} implies that, when $0<\alpha<p$, Assumptions B1 and B2 hold and
$$
\eta = \frac{\Gamma(1-\alpha/p)}{\alpha}R_\ell(\mathbb R) = 1.
$$
By \eqref{eq: back to sigma}, it follows that we are tempering a symmetric stable distribution with
$$
s:=\sigma(\{-1\})=\sigma(\{1\}) = C \int_0^\infty x^\alpha (1+x)^{-2-\alpha-\ell}\rd x.
$$
Let $\mu=\mathrm S_\alpha(\sigma,b)$ and $\tilde\mu=\mathrm{TS}_\alpha^p(R_\ell,\tilde b)$. When $p=1$, methods for evaluating the pdfs and related quantities of both of these distribution are available in the SymTS package \citep{Grabchak:Cao:2017} for the statistical software R. For this reason, we focus on the case $p=1$. In this case, we are restricted to $\alpha\in(0,1)$. 

\begin{figure}
\linespread{.75}
\begin{tabular}{cc}
\includegraphics[trim={1.15cm 1cm 1cm .5cm},clip,scale=.375]{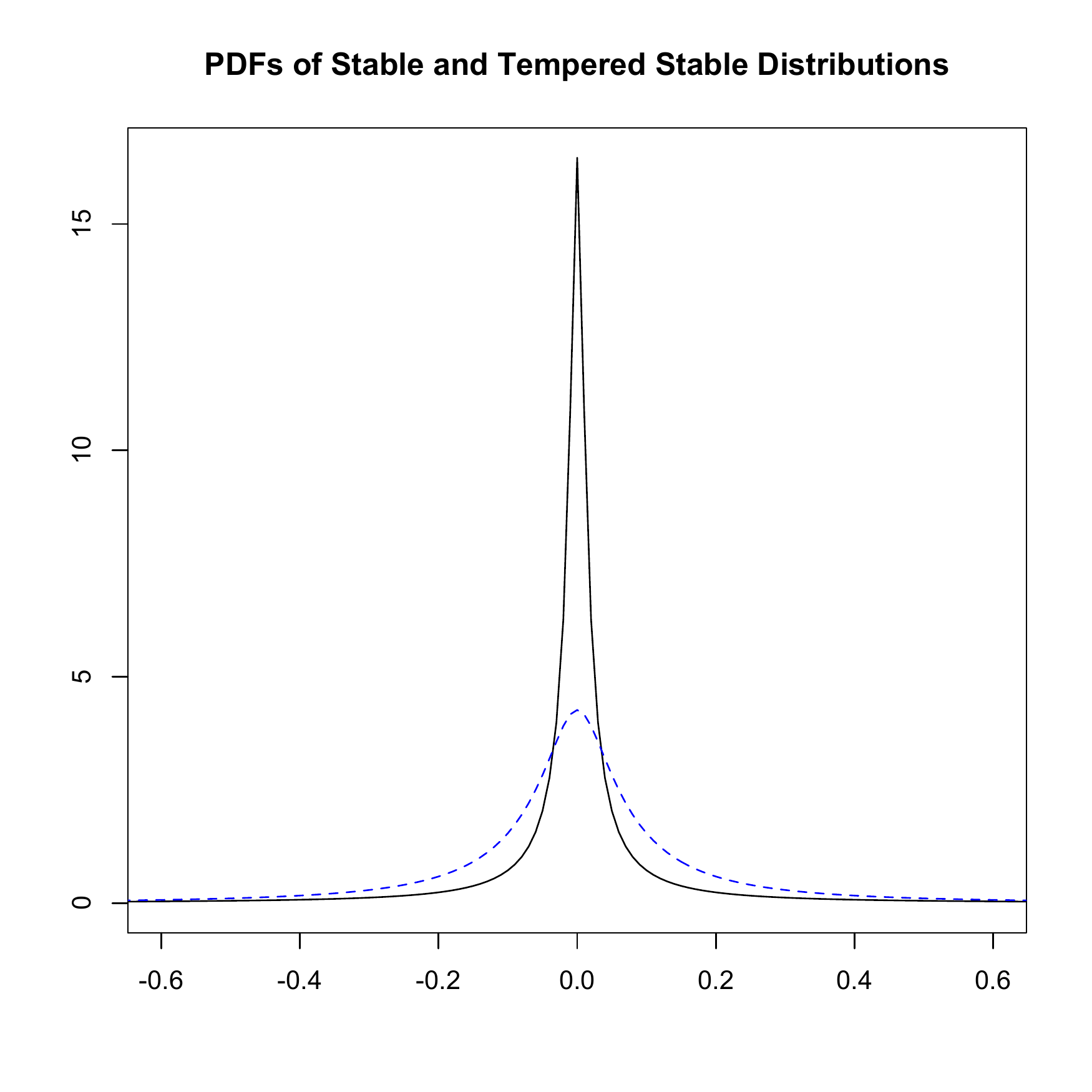} & \includegraphics[trim={1.15cm 1cm 1cm .5cm},clip,scale=.375]{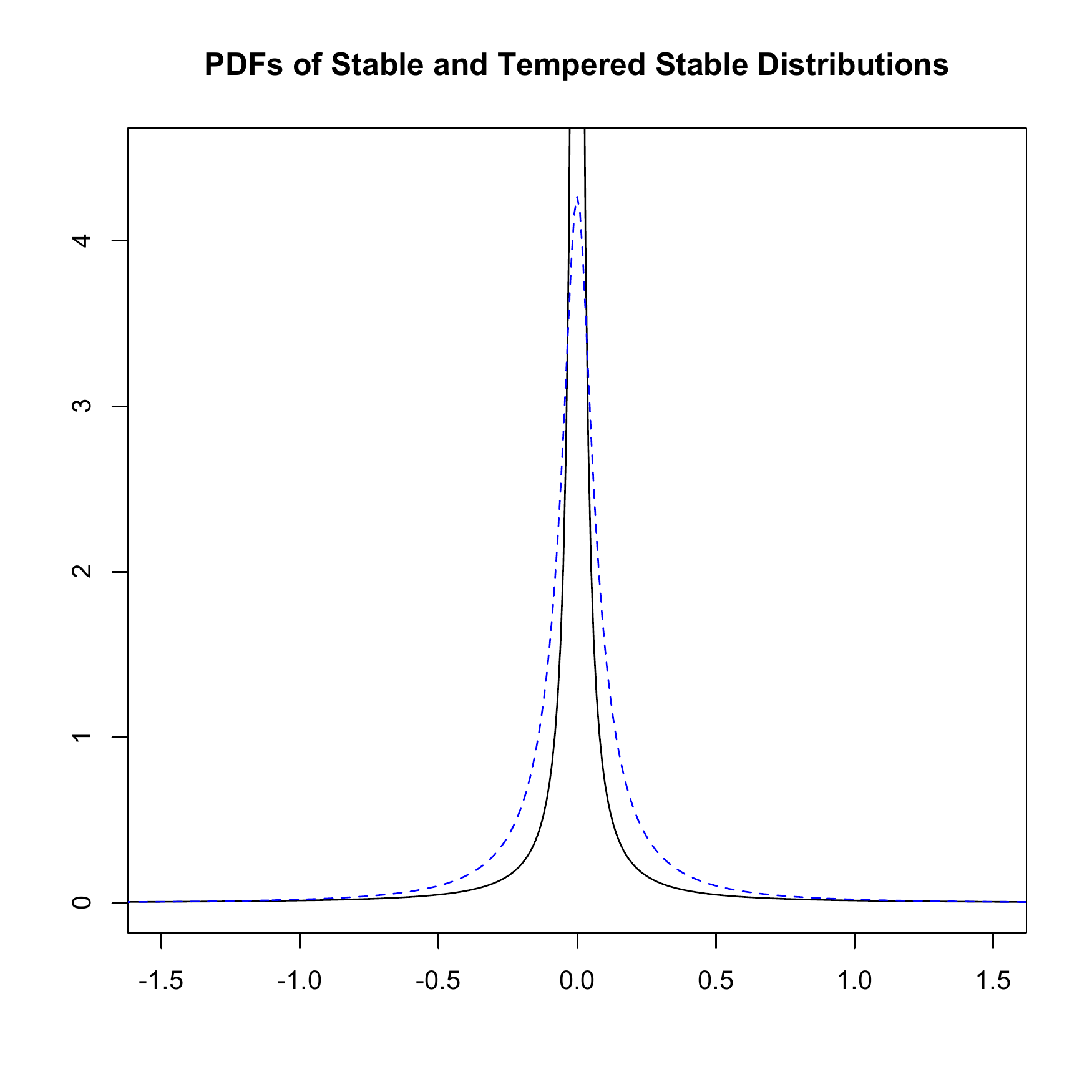} 
\end{tabular}
\vspace{-.5cm}\caption{The solid line is the pdf of the stable distribution $\mu$ and the dashed line is the pdf of the tempered stable distribution $\tilde\mu$. These are presented at two scales.}\label{fig: densities}
\end{figure}

\begin{figure}
\linespread{.75}
\begin{tabular}{cc}
\includegraphics[trim={1.15cm 1.5cm 1cm .5cm},clip,scale=.375]{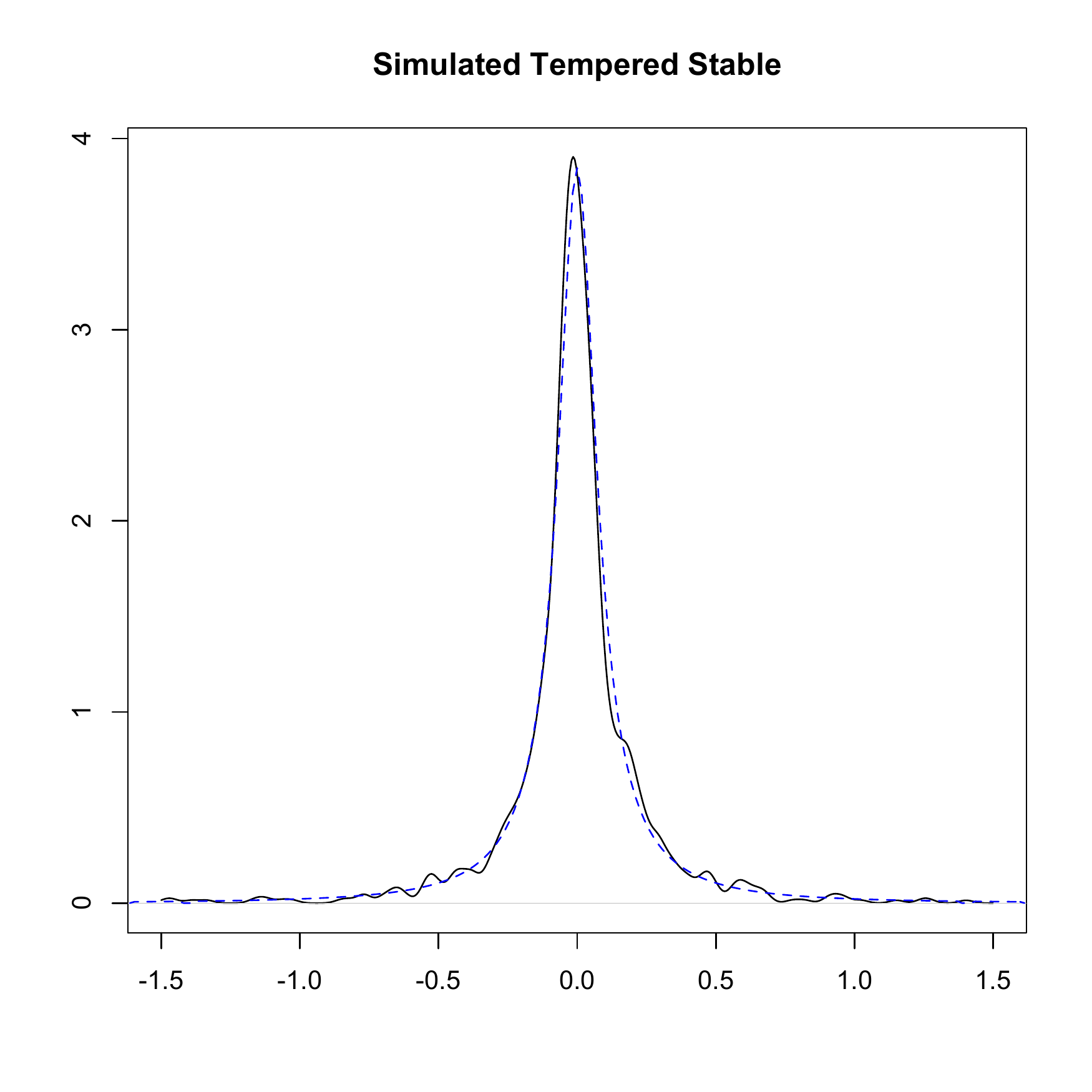} & \includegraphics[trim={1.15cm 1.5cm 1cm .5cm},clip,scale=.375]{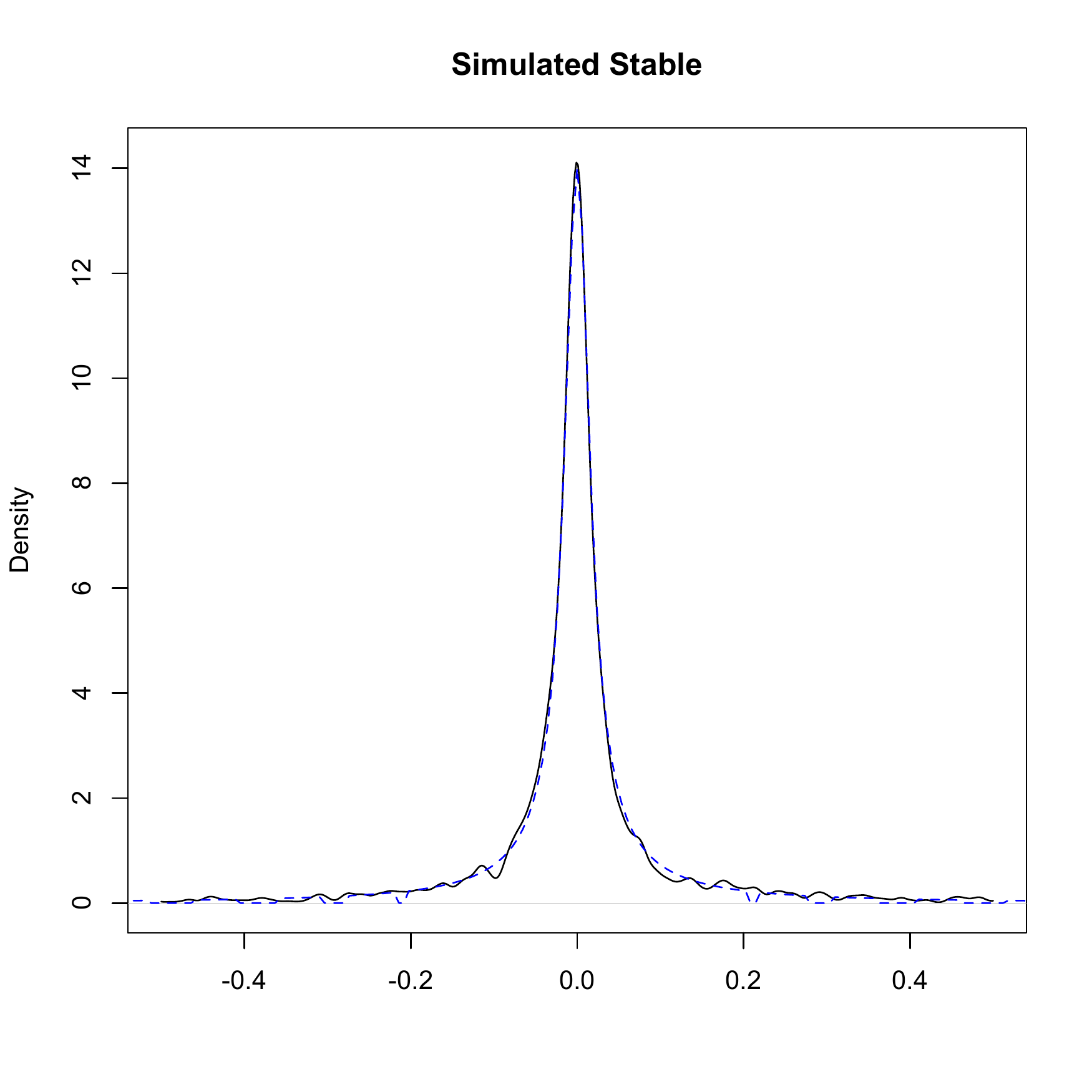}\\
\end{tabular}
\vspace{-.2cm}\caption{Tempered Stable and Stable KDE. On the left, the solid line is the KDE of the simulated tempered stable random variables, and the dashed line is the pdf of $\tilde\mu$ smoothed by the kernel and bandwidth used for the KDE. On the right, the solid line is the KDE of the simulated stable random variables, and the dashed line is the pdf of $\mu$ smoothed by the kernel and bandwidth used for the KDE.}\label{fig: hist}
\end{figure}

For our simulations, we took $\alpha=.75$, $\ell=1$, and $b=\tilde b=0$.  By numerical integration, we got $s=0.0591034$. Let $\tilde\mu=\mathrm{TS}_\alpha^p(R_\ell,\tilde b)=\mathrm{TS}_{.75}^1(R_{1},0)$ and let $\mu=\mathrm S_\alpha(\sigma, 0)=\mathrm S_{.75}(\sigma, 0)$, where $\sigma(\{-1\})=\sigma(\{1\}) = 0.0591034$. Plots of the pdfs of $\mu$ and $\tilde\mu$ are given in Figure \ref{fig: densities}. 

To get $1000$ observations from $\tilde\mu$ using Algorithm 1, we expect to need $1000 e^{1}\approx 2718$ iterations. For simplicity, we ran $3000$ iterations. Thus, we began by simulating $3000$ observations from $\mu$. This was done using the method of \cite{Chambers:Mallows:Stuck:1976}. We then applied Algorithm 1 to see which observations should be rejected. Here, we used the SymTS package to evaluate the pdfs. In the end, we wound up with $1110$ observations from $\tilde \mu$.

Figure \ref{fig: hist} (left) plots the kernel density estimator (KDE) for our samples from $\tilde\mu$. Here, KDE used a Gaussian kernel with a bandwidth of $.02463$. The plot is overlaid with the pdf of $\tilde\mu$ smoothed by the Gaussian Kernel with this bandwidth. This verifies, numerically, that we are simulating from the correct distribution.  For comparison, Figure \ref{fig: hist} (right), plots the KDE of the original $3000$ samples from $\mu$. This is overlaid with the smoothed pdf of $\mu$. Here, KDE used a Gaussian kernel with a bandwidth of $.006565$. 

\begin{figure}
\linespread{.75}
\begin{tabular}{cc}
\includegraphics[trim={1.15cm 1cm 1cm .5cm},clip,scale=.375]{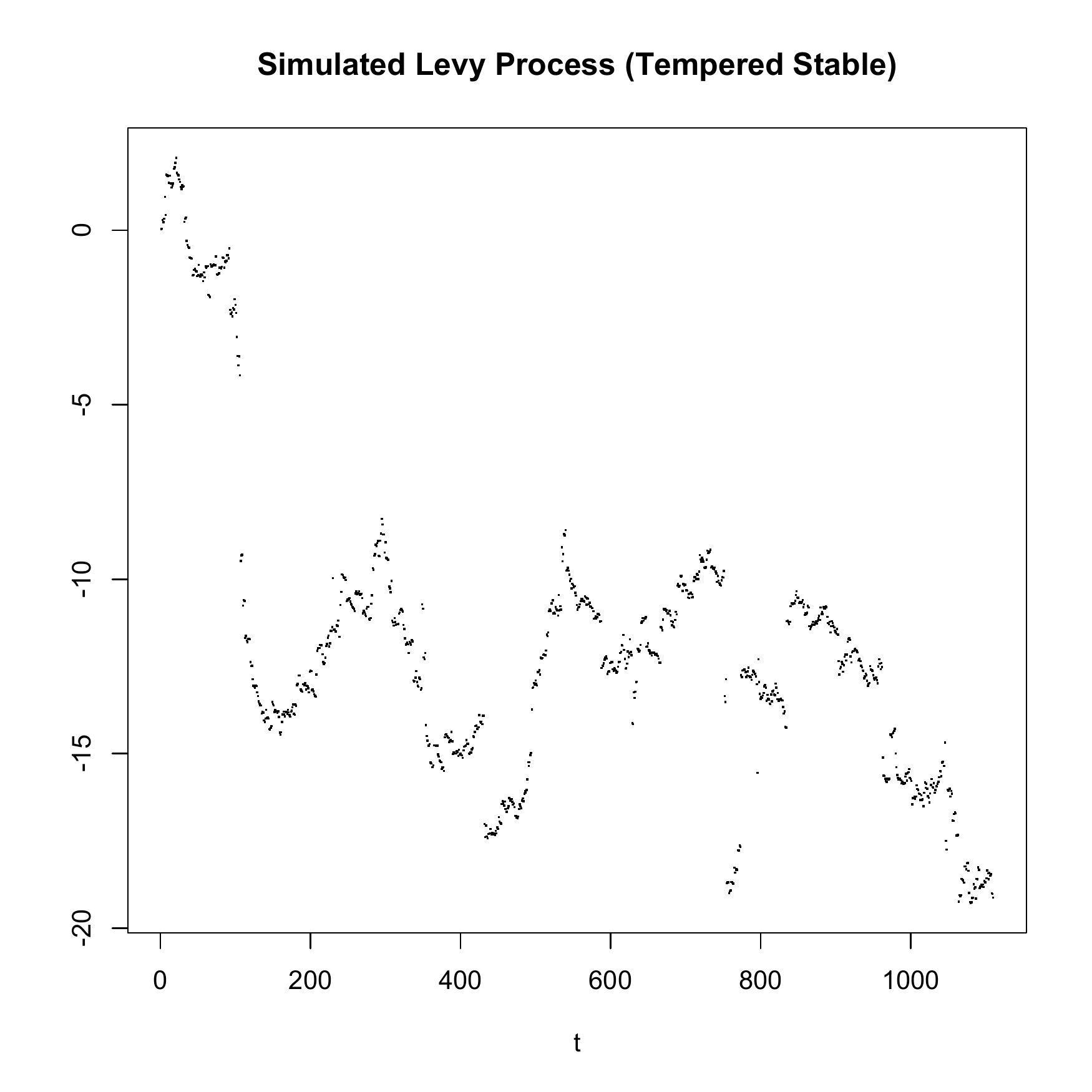} & \includegraphics[trim={1.15cm 1cm 1cm .5cm},clip,scale=.375]{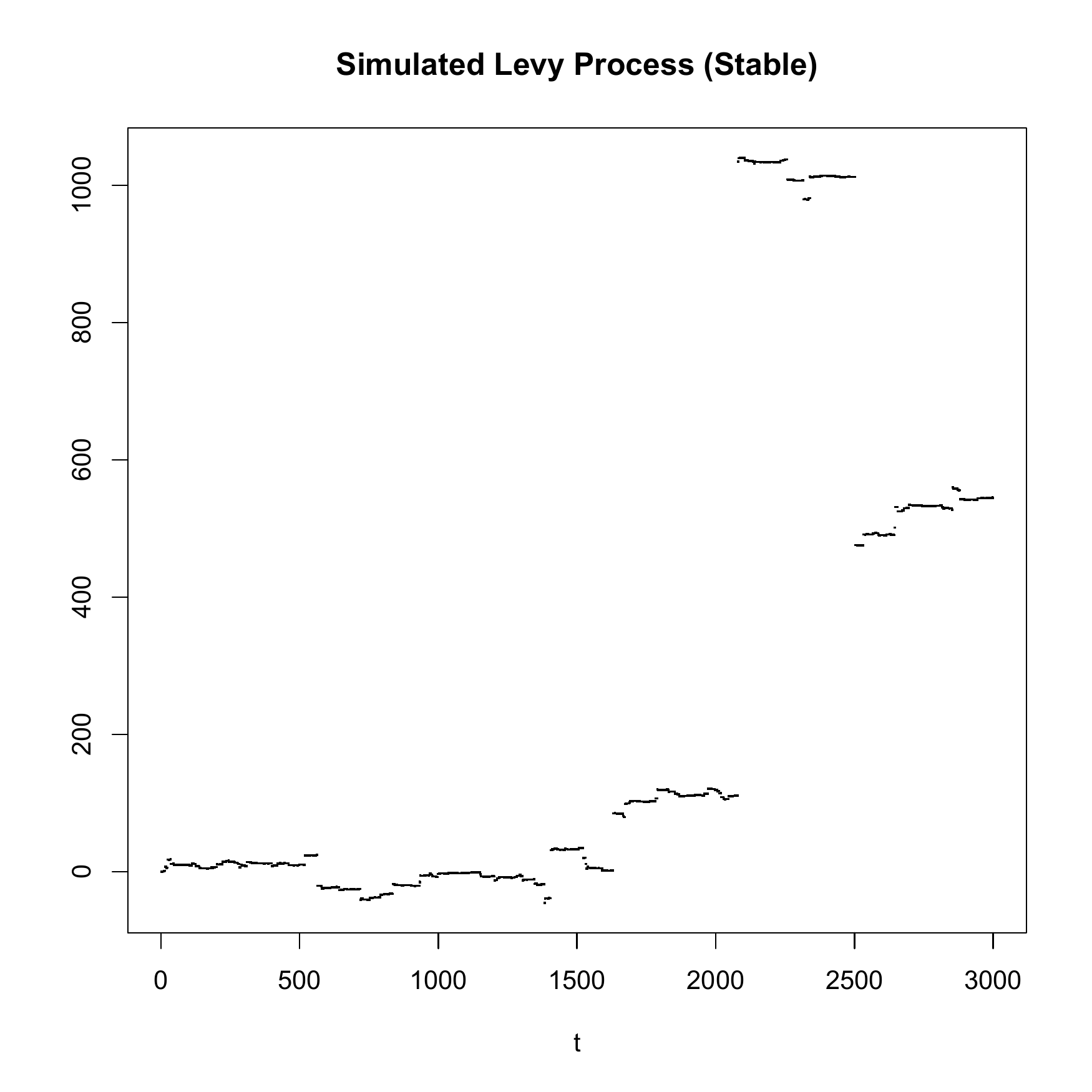}
\end{tabular}
\vspace{-.5cm}\caption{Simulated L\'evy Processes. On the left is the simulated tempered stable L\'evy process and on the right is the simulated stable L\'evy process. The tempered stable L\'evy process was obtained by rejecting some of the jumps of the stable L\'evy process.}\label{fig: process}
\end{figure}

\begin{figure}
\linespread{.75}
\begin{tabular}{cc}
\includegraphics[trim={1.15cm 1.1cm 1cm .5cm},clip,scale=.375]{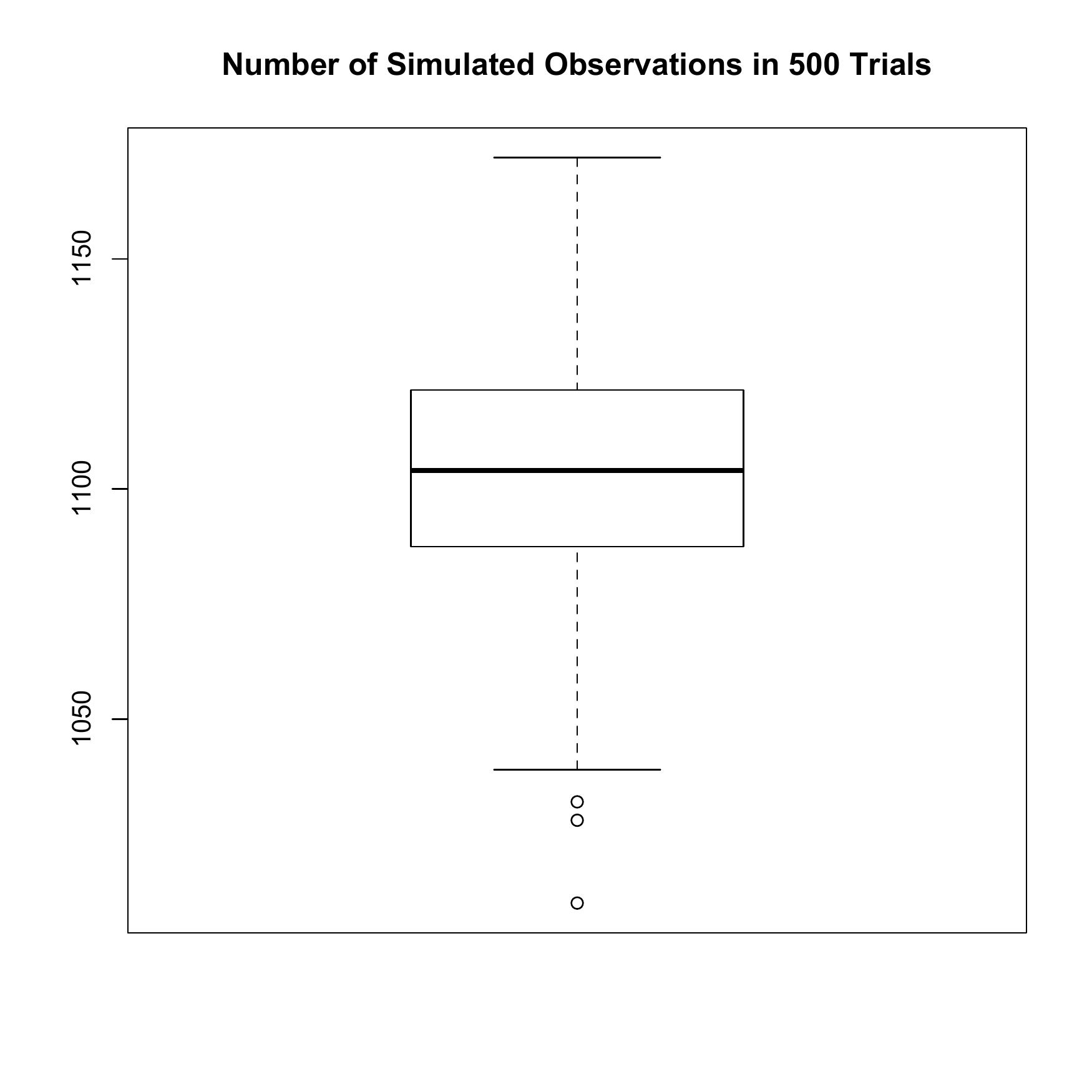} & \includegraphics[trim={1.15cm 1.1cm 1cm .5cm},clip,scale=.375]{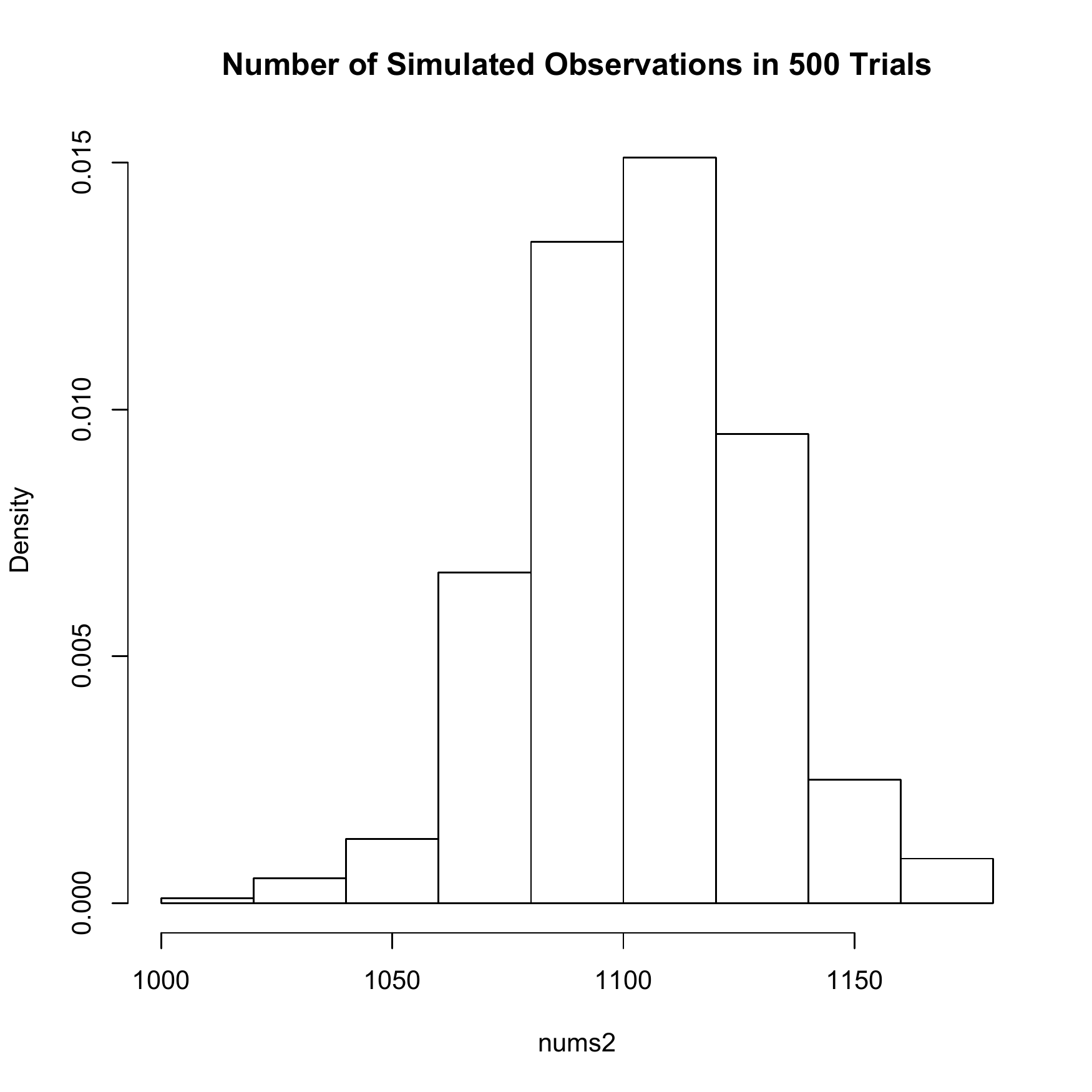}
\end{tabular}
\vspace{-.5cm}\caption{The number of observations from $\tilde\mu$ (obtained based on a sample of size $3000$ from $\mu$) was evaluated $500$ times. A boxplot and histogram of these is given.}\label{fig: boxplot}
\end{figure}

Next, consider the L\'evy process $\{\tilde X_t:t\ge0\}$, where $\tilde X_1\sim\tilde\mu$. To simulate $\tilde X_1,\tilde X_2,\dots,\tilde X_{1110}$, we apply \eqref{eq: get process} to the iid increments, which we simulated above. A plot of this L\'evy process is given in the left plot of Figure \ref{fig: process}. For comparison, the right plot of Figure \ref{fig: process} gives the L\'evy process based on the original $3000$ iid increments from $\mu$. Comparing the two processes, we see that all of the largest jumps have been rejected.

In our simulation, we used $3000$ observations from $\mu$ to get $1110$ observations from $\tilde\mu$. Of course, due to the structure of the algorithm, if we run the simulation again, we may get a different number of observations from $\tilde\mu$. We performed the simulation $500$ times to see how many observations from $\tilde\mu$ are obtained.  Figure \ref{fig: boxplot} gives a boxplot and a histogram for all resulting values. The smallest value observed was $1010$ and the largest was $1172$. The mean was $1104.1$ with a standard deviation of $25.1$. Note that the observed mean is very close to the theoretical mean of $3000e^{-1}\approx 1103.6$.

Now, consider the case, where we want to simulate from $\tilde\mu^{10}=\mathrm{TS}(10R,0)$, which is the distribution of $\tilde X_{10}$. A plot of this distribution is given in Figure \ref{fig: t10} (left). We have two choices. First, we can use Algorithm 1 directly, which requires, on average, $e^{10}\approx22026$ iterations to get one observation. Second, we can use Algorithm 1 to simulate $10$ independent observations from $\tilde \mu$, and then aggregate these by taking the sum. This requires, on average, $10e^1\approx27$ iteration to get one observation. Clearly, the second approach is more efficient, and, hence, it is the one that we use. 

\begin{figure}
\linespread{.75}
\begin{tabular}{cc}
\includegraphics[trim={1.15cm 1cm .5cm .5cm},clip,scale=.375]{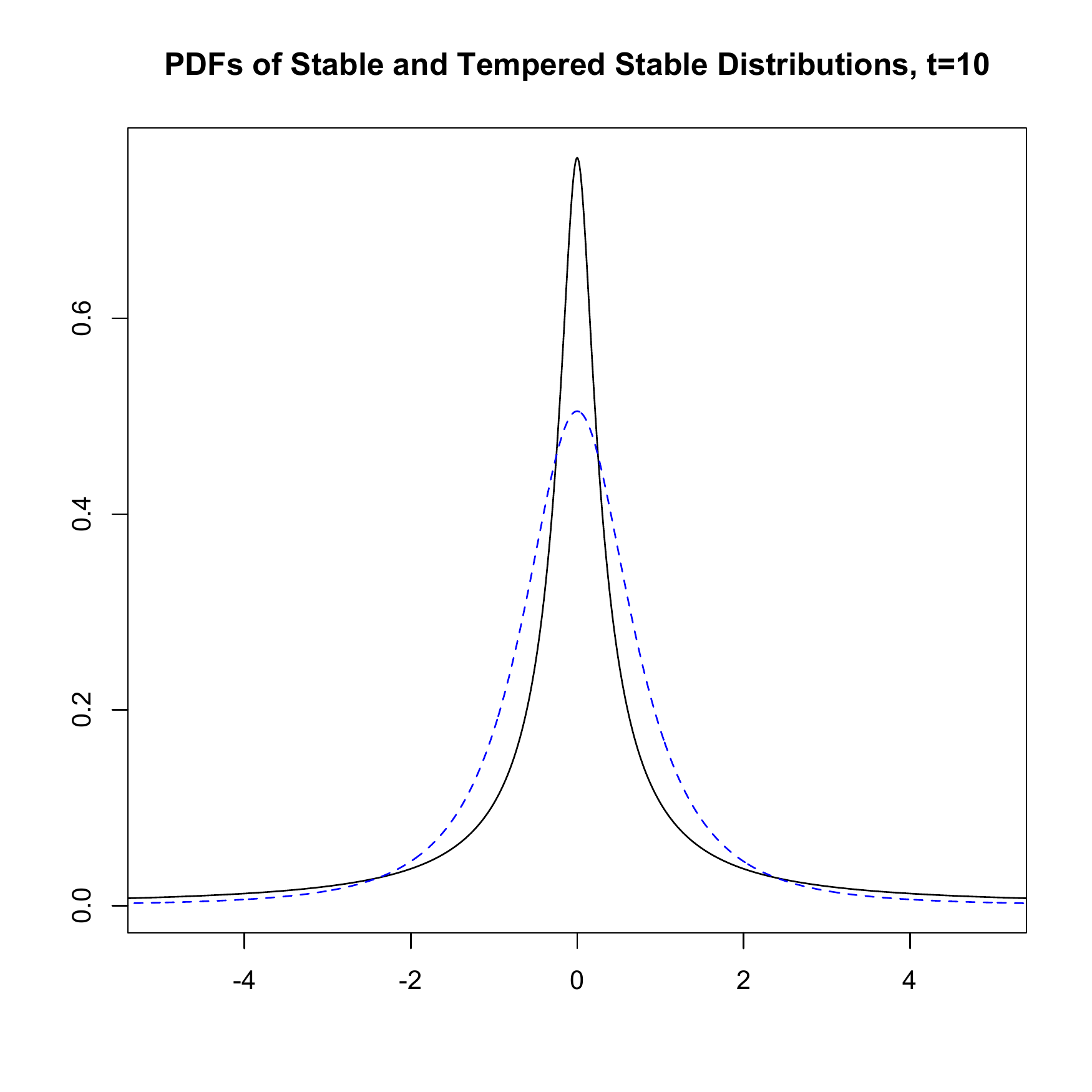} & \includegraphics[trim={1.15cm 1cm 1cm .5cm},clip,scale=.375]{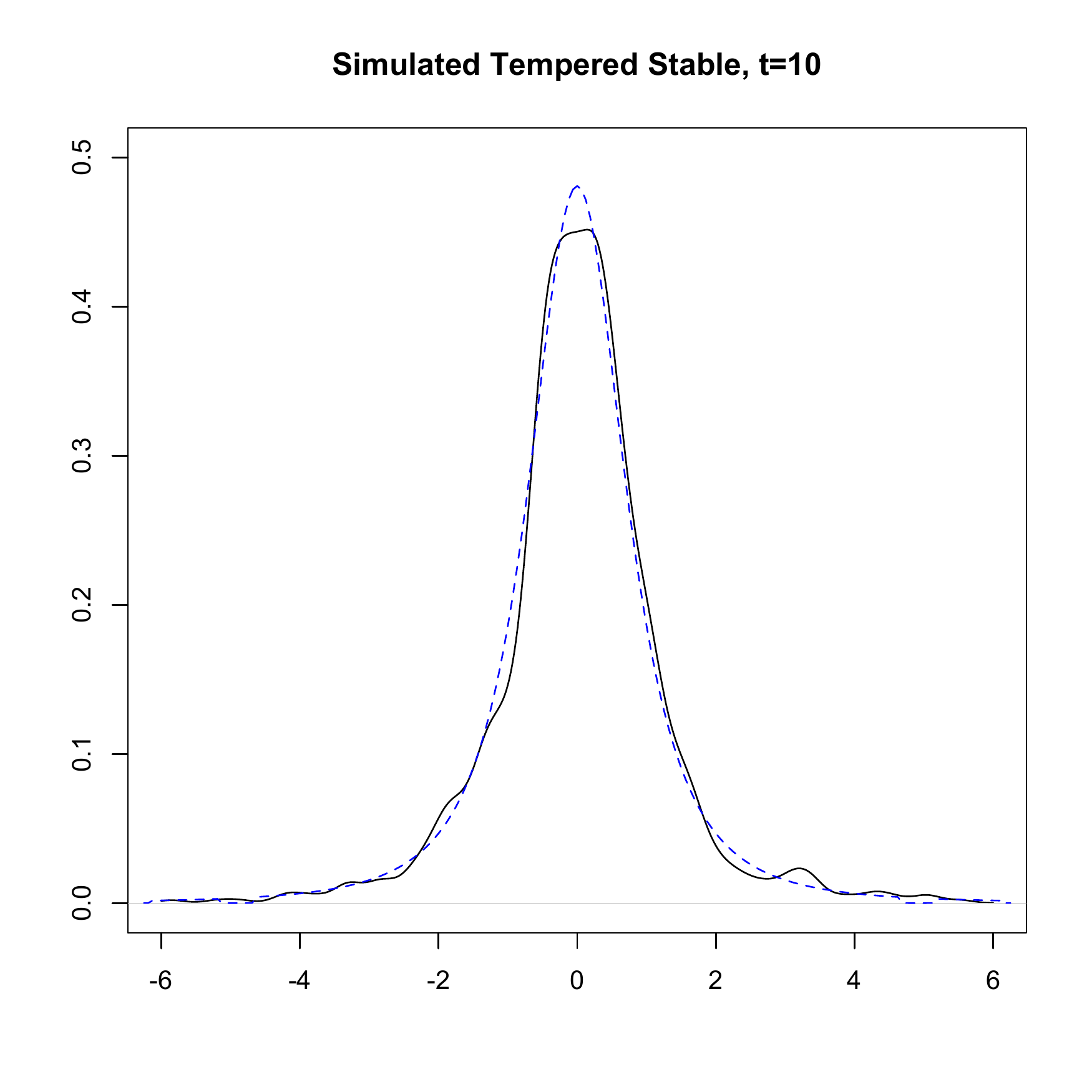}
\end{tabular}
\vspace{-.5cm}\caption{Plots for $t=10$. On the left, the solid line is the pdf of the stable distribution $\mu^{10}$ and the dashed line is the pdf of the tempered stable distribution $\tilde\mu^{10}$. On the right, the solid line is the KDE of the simulated tempered stable random variables, and the dashed line is the pdf of $\tilde\mu^{10}$ smoothed by the kernel and bandwidth used for the KDE.}\label{fig: t10}
\end{figure}

Say that we want $1000$ independent observations from $\tilde\mu^{10}$. This requires $10000$ independent observations from $\tilde\mu$, and we expect to need $10000e^1\approx27183$ iterations. We performed $30000$ iterations, which gave us $11130$ observations from $\tilde \mu$. These, in turn, gave $1113$ observations from $\tilde\mu^{10}$. In Figure \ref{fig: t10} (right), we plot the KDE of the simulated data. This is  overlaid with the pdf of $\tilde\mu$ smoothed by the appropriate kernel and bandwidth. Here, KDE used the Gaussian kernel with bandwidth $0.1863$.

\begin{table}
\begin{center}
\begin{tabularx}{\textwidth}{c|ccc|cc|c}
\hline
 & \multicolumn{3}{c|}{Algorithm 1} & \multicolumn{2}{c|}{Inversion Method} & \multicolumn{1}{c}{} \\
\hline
t & run time & iterations & obs &run time& obs & ratio\\
\hline
1 & 24.711 sec & 3000 &1121 &  285.210 sec & 1000  & 0.087 \\
2 & 47.399 sec & 6000 & 1105 & 255.879  sec & 1000&  0.185\\
5 & 118.563 sec & 15000 & 1104 & 224.227 sec & 1000&  0.529\\
10 & 237.747 sec & 30000  & 1084 &  193.319 sec &1000& 1.230 \\
20 &443.286  sec & 56000 & 1024 & 191.004 sec &1000& 2.321\\
\hline
\end{tabularx}
\end{center}
\caption{Comparison of the performance of Algorithm 1 and the inversion method for several values of $t$. For Algorithm 1, the iterations column gives the number of stable random variables simulated and the obs column gives the number of tempered stable random variables obtained after performing rejection and aggregation. The ratio column is the run time for Algorithm 1 divided by the run time for the inversion method.}\label{table}
\end{table}

\begin{table}
\begin{center}
\begin{tabularx}{\textwidth}{cc|cc|cc|c}
\hline
 & & \multicolumn{2}{c|}{Algorithm 1} & \multicolumn{2}{c|}{Inversion Method} & \multicolumn{1}{c}{}  \\
\hline
$\alpha$ & $\ell$ & run time & observations &run time& observations & ratio\\
\hline
.50 & 0.5 & 66.841 sec &1057 &  785.870 sec & 1000 & 0.085 \\
.50 & 1.0 & 30.531 sec & 1116 & 338.867  sec & 1000 &0.090 \\
.50 & 5.0 & 47.035 sec  &  1090 &  509.506 sec & 1000 & 0.092\\
\hline
.75 & 0.5& 53.346 sec   & 1124 &  668.588 sec & 1000 & 0.080\\
.75 & 1.0& 23.844  sec & 1082 &  282.788 sec & 1000 &0.084\\
.75 & 5.0& 37.353 sec  & 1093  &  448.820 sec& 1000 & 0.083\\
\hline
.95 & 0.5& 51.121 sec  & 1101 & 601.804 sec & 1000& 0.084\\
.95 & 1.0& 23.444 sec  &  1096 &  278.804 sec& 1000&0.084 \\
.95 & 5.0& 38.190 sec  & 1100 & 470.784 sec& 1000& 0.081\\
\hline
\end{tabularx}
\end{center}
\caption{Comparison of the performance of Algorithm 1 and the inversion method for several values of $\alpha$ and $\ell$. For Algorithm 1, the observations column gives the number of tempered stable random variables obtained based on a sample for $3000$ observations from the stable distribution. The ratio column is the run time for Algorithm 1 divided by the run time for the inversion method.
}\label{table2}
\end{table}

For a comparison, we also performed simulations using the inversion method, which is implemented in the SymTS package. First, we fixed $\alpha=0.75$, $\ell=1.0$, and considered $t=1, 2, 5,10, 20$. Computation times for both approaches are reported in Table \ref{table}. Not surprisingly, when $t$ is small, Algorithm 1 is more efficient, but for large $t$ the inversion method works better. Second, we fixed $t=1$ and considered $\alpha=0.5, 0.75, 0.95$ and $\ell=0.5, 1.0, 5.0$. Computation times are reported in Table \ref{table2}. Both sets of simulations were performed on a desktop PC with a 3.40GHz Intel Core i7-6700 CPU. The computer was running Ubuntu 16.04.4 LTS and R version 3.2.3. 

For the above simulations, we attempted to cover a large  part of the parameter space. However, while the parameter $\alpha$ can, in principle, take any value in $(0,1)$, we did not consider the case $\alpha<0.5$. This is because we found it difficult to numerically evaluate the pdfs with enough accuracy in this case. We conjecture that this difficulty is due to the fact that, for such values of $\alpha$, stable distributions are extremely heavy tailed. On the other hand, there is no difficulty with values near $1$, as is illustrated by the simulations for $\alpha=.95$.

For both Algorithm 1 and the inversion method, the most expensive part of the calculation involves evaluating the required functions. These are $f_t$ and $\tilde f_t$ for Algorithm 1 and the quantile function for the inversion method. When we are simulating many observations from the same distribution, both methods can be improved by precomputing these functions on a grid and then using interpolation.


\begin{thebibliography}{36}

\bibitem[Abramowitz and Stegun(1972)]{Abramowitz:Stegun:1972}
Abramowitz, M.,  Stegun, I.A.:  Handbook of Mathematical Functions 10th ed.
\newblock Dover Publications, New York (1972).

\bibitem[Baeumer and Kovacs(2012)]{Baeumer:Kovacs:2012}
Baeumer, B., Kov\'acs, M.:
\newblock Approximating multivariate tempered stable processes. Journal of Applied Probability \textbf{49}, 167--183 (2012).

\bibitem[Bianchi et al.(2011)]{Bianchi:Rachev:Kim:Fabozzi:2011}
Bianchi, M.L., Rachev, S.T., Kim, Y.S., Fabozzi, F.J.:
\newblock Tempered infinitely divisible distributions and processes.
Theory of Probability and Its Applications \textbf{55}, 2--26 (2011).

\bibitem[Chambers, Mallows, and Stuck(1976)]{Chambers:Mallows:Stuck:1976}
Chambers, J.M., Mallows, C.L., Stuck, B.W.:
\newblock A method for simulating stable random variables.
Journal of the American Statistical Association \textbf{71},  340--344 (1976).

\bibitem[Cohen and Rosi\'nski(2007)]{Cohen:Rosinski:2007}
Cohen, S., Rosi{\'n}ski, J.:
\newblock Gaussian approximation of multivariate L\'evy processes with applications to simulation of tempered stable processes.
Bernoulli \textbf{13}, 195--210 (2007).

\bibitem[Cont and Tankov(2004)]{Cont:Tankov:2004}
Cont, R., Tankov, P.:
Financial Modeling With Jump Processes.
\newblock Chapman \& Hall, Boca Raton (2004)

\bibitem[Devroye(1986)]{Devroye:1986}
Devroye, L.:
Non-Uniform Random Variate Generation.
\newblock Springer, New York (1986)

\bibitem[Grabchak(2012)]{Grabchak:2012}
Grabchak, M.:
\newblock On a new class of tempered stable distributions: Moments and regular
  variation.
Journal of Applied Probability \textbf{49}, 1015--1035 (2012).

\bibitem[Grabchak(2016)]{Grabchak:2016book}
Grabchak, M.:
\newblock {Tempered Stable Distributions: Stochastic Models For Multiscale Processes}.
\newblock Springer, Cham, Switzerland (2012).

\bibitem[Grabchak and Cao(2017)]{Grabchak:Cao:2017}
Grabchak, M., Cao, L: SymTS:  Symmetric  tempered  stable  distributions. Ver.\ 1.0,  R Package.
\url{https://cran.r-project.org/web/packages/SymTS/index.html} (2017).

\bibitem[Grabchak and Samorodnitsky(2010)]{Grabchak:Samorodnitsky:2010}
Grabchak, M., Samorodnitsky, G.:
\newblock Do financial returns have finite or infinite variance? A paradox
  and an explanation.
Quantitative Finance \textbf{10}, 883--893 (2010).

\bibitem[Imai and Kawai(2011)]{Imai:Kawai:2011}
Imai, J., Kawai, R.:
\newblock On finite truncation of infinite shot noise series representation of tempered stable laws.
Physica A \textbf{390}, 4411--4425 (2011).

\bibitem[Kawai and Masuda(2011)]{Kawai:Masuda:2011}
Kawai, R., H.~Masuda, H.:
\newblock On simulation of tempered stable random variates.
Journal of Computational and Applied Mathematics \textbf{235}, 2873--2887 (2011).

\bibitem[K\"uchler and Tappe(2013)]{Kuchler:Tappe:2013}
K\"uchler, U., S.~Tappe, S.:
\newblock Tempered stable distributions and processes.
Stochastic Processes and their Applications \textbf{123}, 4256--4293 (2013).

\bibitem[Nolan(1998)]{Nolan:1998}
Nolan, J.P.: Multivariate stable distributions: Approximation, estimation, simulation and identification.
\newblock In  Adler, R.J, Feldman, R.E., Taqqu, M.S. (Eds.),
A Practical Guide to Heavy Tails, pp.\ 509--526. Birkhauser, Boston (1998).

\bibitem[Rosi\'nski(2007)]{Rosinski:2007}
Rosi{\'n}ski, J.:
\newblock Tempering stable processes.
Stochastic Processes and their Applications \textbf{117}, 677--707 (2007)

\bibitem[Rosi\'nski and Sinclair(2010)]{Rosinski:Sinclair:2010}
Rosi{\'n}ski, J., Sinclair, J.L.,
\newblock Generalized tempered stable processes.
Banach Center Publications \textbf{90}, 153--170 (2010).

\bibitem[Ryznar(2002)]{Ryznar:2002}
Ryznar, M.:
\newblock Estimates of Green function for relativistic $\alpha$-stable process.
Potential Analysis \textbf{17}, 1-23 (2002).

\bibitem[Samorodnitsky and Taqqu(1994)]{Samorodnitsky:Taqqu:1994}
Samorodnitsky, G., Taqqu, M.S.: Stable Non-Gaussian Random Processes: Stochastic Models with Infinite Variance. 
\newblock Chapman \& Hall, New York (1994).

\bibitem[Sato(1999)]{Sato:1999}
Sato, K.:
{L}\'evy Processes and Infinitely Divisible Distributions.
\newblock Cambridge University Press, Cambridge (1999).

\bibitem[Tweedie(1984)]{Tweedie:1984}
Tweedie, M.C.K.:
\newblock An index which distinguishes between some important exponential families. 
\newblock In Ghosh, J.K., Roy, J. (eds.), Statistics: Applications and New Directions. Proceedings of the Indian Statistical Institute Golden Jubilee International Conference. Indian Statistical Institute, Calcutta, pg.\ 579--604 (1984).

\end{thebibliography}
\end{document}